\newtheorem{definition}{Definition}
\newtheorem{remark}{Remark}
\newtheorem{lemma}{Lemma}
\newtheorem{problem}{Problem}
\newtheorem{assumption}{Assumption}
\newtheorem{proposition}{Proposition}
\newtheorem{theorem}{Theorem}
\newtheorem{corollary}{Corollary}
\tikzset{join/.code=\tikzset{after node path={%
\ifx\tikzchainprevious\pgfutil@empty\else(\tikzchainprevious)%
edge[every join]#1(\tikzchaincurrent)\fi}}}
\tikzset{>=stealth',every on chain/.append style={join},
         every join/.style={->}}
\tikzstyle{labeled}=[execute at begin node=$\scriptstyle,
\newcommand{\myunit}{0.6 cm}
\tikzset{
    node style sp/.style={draw,circle,minimum size=\myunit},
    node style ge/.style={circle,minimum size=\myunit},
    arrow style mul/.style={draw,sloped,midway,fill=white},
    arrow style plus/.style={midway,sloped,fill=white},
}
\definecolor{matlabBlue}{rgb}{0,0.4470, 0.7410}
\definecolor{matlabBlue2}{rgb}{0.7,0.2, 0}
\definecolor{matlab1}{rgb}{0,0.4470, 0.7410}
\definecolor{matlab2}{rgb}{0.8500, 0.3250, 0.0980}
\definecolor{matlab3}{rgb}{0.9290, 0.6940, 0.1250}
\definecolor{matlab4}{rgb}{0.4940, 0.1840, 0.5560}
\definecolor{matlab5}{rgb}{0.4660, 0.6740, 0.1880}
\definecolor{matlab6}{rgb}{0.3010, 0.7450, 0.9330}
\newcommand\NoDo{\renewcommand\algorithmicdo{}}
\title{ 
Learning in Memristive Neural Networks
}
\author{H. M. Heidema, H. J. van Waarde, B. Besselink% <-this % stops a space
\thanks{The authors are with the Jan C. Willems Center for Systems
and Control, and the Bernoulli Institute for Mathematics, Computer
Science, and Artificial Intelligence, University of Groningen, The Netherlands. 
Marieke Heidema and Bart Besselink are also with CogniGron (Groningen Cognitive Systems and Materials Center), University of Groningen, The Netherlands. Marieke Heidema and Bart Besselink acknowledge the financial support of the CogniGron research center and the Ubbo Emmius Funds. 
Henk van Waarde acknowledges financial support by the Dutch Research Council under the NWO Talent Programme Veni Agreement (VI.Veni.222.335). Email: {\tt\small h.m.heidema@rug.nl; 
h.j.van.waarde@rug.nl; b.besselink@rug.nl.}}
}
\begin{document}
\mathtoolsset{showonlyrefs}

\maketitle
\thispagestyle{empty}

\begin{abstract}
Memristors are nonlinear two-terminal circuit elements whose resistance at a given time depends on past electrical stimuli. 
Recently, networks of memristors have received attention in neuromorphic computing since they can be used to implement Artificial Neural Networks (ANNs) in hardware.
For this, one can use a class of memristive circuits called crossbar arrays.
In this paper, we describe a circuit implementation of an ANN and resolve three questions concerning such an implementation.  
In particular, we show (1) how to evaluate the implementation at an input, 
(2) how the resistance values of the memristors at a given time can be determined from external (current) measurements, and (3) how the resistances can be steered to desired values by applying suitable external voltages to the network. 
The results will be applied to two examples: an academic example to show proof of concept and an ANN that was trained using the MNIST dataset. 
 
\end{abstract}

\begin{IEEEkeywords}
Artificial neural networks, memristors, crossbar arrays, memristive electrical circuits, inference, writing.
\end{IEEEkeywords}

\section{INTRODUCTION}
Over the past decades, our society and technology has become increasingly reliant on computing technology. However, current digital computers use a lot of energy, mainly because they require the constant transfer of data between memory and processing units.
To reduce the energy consumption of data transmissions, we require a different type of computer with the ability to perform in-memory computing, i.e., perform certain computations directly in the memory where the data is stored. 
To this end, the potential of neuromorphic computing has been studied \cite{Ribar2021, Indiveri2015}. 
This is a method of designing \textit{analog} information processing systems by drawing inspiration from the nervous system in the brain. 
Such computing technology could lead to a significant reduction in energy consumption,  compared to existing digital computers, see \cite{Mead1990}.

In particular, there has been an increasing interest in efforts to implement Artificial Neural Networks (ANNs) directly in analog hardware, e.g., \cite{Dongare2012, Rezvani2012, Hopfield1988}.
These networks are inspired by biological neural networks and consist of layers of neurons between which information travels via synapses.
They form the foundation for modern AI applications and have been used successfully in tasks ranging from pattern recognition, image processing, to speech recognition and language processing  \cite{Conti1994, Hassoun1995, Deisenroth2020}.
Implementing them directly in analog hardware, as opposed to their software implementation on digital computers, could lead to more energy-efficient computing technology which is able to perform in-memory computing.

To this end, devices that behave like synapses are needed \cite{Thomas2013, Snider2011, Khalid2019, Sah2014}. 
Memristors, originally introduced by Chua \cite{Chua1971} as the `fourth electrical circuit element', are suitable candidates for such synapses.
They can be regarded as tunable resistors for which the resistance value depends on past external stimuli and that, in the absence of external stimuli, retain their resistance value.  When viewing this resistance value as the memory of a memristor, a change in resistance can be seen as learning. This mimics the role of synapses in ANNs, whose memory is their weight which gets updated during training.

In particular, memristors can naturally be used as synapses in hardware implementations of ANNs when they are organized in so-called crossbar arrays \cite{Bayat2018}. Crossbar arrays are electrical circuits consisting of row and column bars, with a memristor on every cross-point. 
Crossbar arrays have been studied for resistors \cite{Sun2019} and memristors \cite{Sebastian2020,Hong2023,Mannocci2021}. Here, it has been found that this network structure can be used to compute matrix-vector products, where the (instantaneous) resistance values of the elements correspond to the entries of the matrix.
In the case of a resistive crossbar array, this means that the matrix with which we can perform matrix-vector products is fixed. However, in the case of memristive crossbar arrays, the resistance value of the memristors is tunable, leading to a set of matrices with which matrix-vector products can be computed. 
ANNs can be implemented in hardware by interconnecting multiple memristive crossbar arrays. Here, the crossbar arrays correspond to synaptic interconnections of the ANN and the interconnection between arrays corresponds to the neurons in the ANN.
For such a hardware implementation of an ANN, the tunability property of the memristors corresponds to the tunability of the strength of the synapses.
This effectively leads to a set of ANNs that can be implemented in the same electrical circuit.

Memristors and memristive networks have been studied previously, \cite{Corinto2016}, \cite{Corinto2021}, where the notions of charge and flux-controlled memristors were introduced, and their passivity properties were established. Furthermore, past research on memristive networks includes investigations on their monotonicity \cite{Chaffey2024} and how this relates to their passivity \cite{Corinto2015}, and found that memristive networks can equivalently be modeled as a single memristor \cite{Huijzer2025}.  
Previous research on memristive crossbar arrays presented a way to retrieve the resistance values of memristors in an array and to steer them so that they coincide with the entries of a given matrix \cite{Heidema2024}. 
These studies present a way of modeling electrical circuits with memristors, using graph theory and Kirchhoff's laws. 
In this work, we will use this to analyze the interconnection of multiple memristive circuits. 

Note that an ANN essentially represents a function, which one could evaluate at a given input. 
Now, what is missing from the previous research is
a way to translate inputs to the function to (voltage) input signals to the hardware implementation and to translate the corresponding output (current) signals to the function outputs. 
In particular, the current literature lacks generality as they focus on particular types of physical memristors \cite{Adhikari2015,Zhang2018,Jiang2025}. 
Methods are needed for general classes of memristors to evaluate the hardware implementation for a given input in a \textit{non-invasive} way: in such a way that the resistance values, which change when external input is applied due to the nature of a memristor, are the same before and after the evaluation.
Furthermore, it needs to be investigated how to steer the resistance values of general types of memristors in the hardware implementation of the ANN so that they coincide with some desired values for the strength of the synapses. 
To this end, we define ANNs, introduce memristors and memristive crossbar arrays, and use this to describe a circuit implementation of an ANN based on memristive crossbar arrays. 
The contributions of this paper are then as follows: (1) we show how one can evaluate (\textit{infer}) the implementation at an input; (2) we show how one can determine (\textit{read}) the resistance values of the memristors at a given time in such a circuit, based on current measurements; (3) we develop and prove convergence of an algorithm that designs input signals to the circuit to steer (\textit{write}) the resistance values of the memristors in such a circuit to desired values. 
Together, these results allow us to implement ANNs in hardware and to compute with them. 

 The remainder of this paper is organized as follows. In Section~\ref{sec: MNNs}, we introduce ANNs, memristors, and memristive crossbar arrays. Furthermore, we discuss how to implement an ANN in hardware using memristive crossbar arrays.
 Section~\ref{sec: problem statement} then formalizes the research problems. 
 Section~\ref{sec:inference} will then concern evaluating the circuit implementation at an input.
 Section~\ref{sec:reading} deals with reading the instantaneous resistance (conductance) values of the memristors. 
 Section~\ref{sec:writing} discusses writing the resistance (conductance) values of the memristors. 
 Section~\ref{sec: applications} will be concerned with two applications of our theory. 
 What follows is a conclusion and discussion of the results in Section~\ref{sec: conclusion}.

\textit{Notation}:
The column vector of size $m$ with all ones is denoted by $\mathds{1}_m$. 
The set of integers from 1 to $n$ is denoted by $[n]$, i.e. $[n]=\{1,\ldots,n\}$. 
The Kronecker product of two matrices $A\in\mathds{R}^{m\times n}$ and $B\in\mathds{R}^{p\times q}$ is denoted by $A\otimes B \in\mathds{R}^{mp\times nq}$. 
The Hadamard (element-wise) product of two matrices $A,B\in\mathds{R}^{m\times n}$ is denoted by $A\odot B \in\mathds{R}^{m\times n}$.
A function $f:\mathds{R}\rightarrow\mathds{R}^{m\times n}$ is called even (odd) if $f(x) = (-)f(-x)$ for all $x\in\mathds{R}$. A function $f:\mathds{R}\rightarrow\mathds{R}^{m\times n}$ is called even (odd) on the interval $[a,b]$ if $f(x) = (-)f(a+b-x)$ for all $x\in[a,\tfrac{a+b}{2}]$.  
A function $g:[a,b]\subseteq\mathds{R}\rightarrow\mathds{R}$ is called strictly monotone if
    \begin{equation}
        (g(x)-g(y)) (x-y)>0
    \end{equation}
for all distinct $x,y\in[a,b]$. A function $g:\mathds{R}\rightarrow\mathds{R}$ is called $\beta-$Lipschitz continuous if there exists $\beta>0$ such that
    \begin{equation}
        |g(x)-g(y)| \leq \beta |x-y|
    \end{equation}
for all $x,y\in\mathds{R}$.

\section{MEMRISTIVE NEURAL NETWORKS} \label{sec: MNNs}
\subsection{Artificial neural networks}
In this paper, we consider feedforward artificial neural networks. 
Specifically, we consider a network of $L+1$ layers with $n_\ell\in\mathds{N}$ neurons in layer $\ell\in [L]$. 
The first layer represents external inputs $\hat{u}\in\mathds{R}^{n_0}$ given to the network and the last layer represents the network's outputs $\hat{y}\in\mathds{R}^{n_L}$. The layers in between, so-called \textit{hidden layers}, process information layer by layer via the synapses connecting the layers. In particular, the activity of the neurons in layer $\ell$ is determined by activities of the neurons in layer $\ell-1$, specifically,
\begin{align} \label{eqn: neuron eqns}
    a^0 &= \hat{u}, \quad a^\ell = \sigma^\ell(\hat{M}^\ell a^{\ell-1}), \quad \hat{y} = a^L,
\end{align}
where $\hat{M}^\ell\in\mathds{R}^{n_\ell\times n_{\ell-1}}$ is a matrix of synaptic \textit{weights}, which represent the strength of the synapses connecting neurons in layer $\ell-1$ with neurons in layer $\ell$.
Moreover, $\sigma^\ell:\mathds{R}^{n_\ell}\rightarrow\mathds{R}^{n_\ell}$ denotes the element-wise application of the \textit{activation function} $\sigma:\mathds{R}\rightarrow \mathds{R}$, which is assumed to be the same for each neuron in the network. Note that the activity $a^\ell$ is the output of layer $\ell$ and that the output of the network is given by the output of the last layer of neurons, i.e., $\hat{y} = a^L$. 
Now, for given synaptic weight matrices $\hat{M}^1,\ldots,\hat{M}^L$, this describes a function from $\hat{u}$ to $\hat{y}$, denoted as 
\begin{equation} \label{eqn: ANN function}
    \hat{y} = h(\hat{u}).
\end{equation} 

\begin{assumption}\label{assumption 1}
    The activation function $\sigma(\cdot)$ is continuously differentiable, odd, strictly monotone, and $\eta$-Lipschitz continuous.  
\end{assumption}

\begin{remark} \label{remark 1}
    Assumption~\ref{assumption 1} includes frequently used activation functions such as the hyperbolic tangent, 
    see \cite{Prezioso2015} and \cite{Zamanidoost2015}, and other scaled versions of the sigmoid function. Note that feedforward neural networks, in general, also contain so-called bias terms \cite{Arnold2019}.
    Here, for simplicity, the bias of each neuron is taken to be zero.
\end{remark}

\subsection{Memristors}
Artificial neural networks can be implemented in hardware using memristors and memristive crossbar arrays \cite{Xia2019}.
In this paper, we consider  memristors as originally postulated by Chua \cite{Chua1971}, see Figure~\ref{fig: single memristor}. They pose a relation between the (magnetic) flux $\varphi$ and (electric) charge $q$, satisfying
\begin{equation}
    \tfrac{d}{dt} \varphi(t) = V(t), \quad \tfrac{d}{dt} q(t) = I(t),
\end{equation}
with $V$ being the voltage across and $I$ the current through the memristor.
In particular, we consider flux-controlled memristors of the form
\begin{align}
     q(t) &= g(\varphi(t)), \label{eqn:flux controlled}
\end{align} 
for some function $g:\mathds{R}\rightarrow\mathds{R}$.
For ease of notation, we will omit the argument $t$ in what follows.

\begin{figure}[t]
    \centering
     \includegraphics[width=0.25\textwidth]{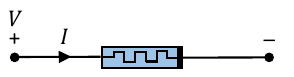}
    \caption{Memristor with current $I$ through it and voltage $V$ over it.}
    \label{fig: single memristor}
\end{figure}

\begin{assumption}\label{assumption 2}
    The function $g(\cdot)$ is continuously differentiable and strictly increasing, i.e.,
    \begin{equation}
        W(\varphi):= \tfrac{d g(\varphi)}{d\varphi}>0
    \end{equation}
for all $\varphi\in\mathds{R}$. 
\end{assumption}

Note that Assumption~\ref{assumption 2} implies that the memristor in \eqref{eqn:flux controlled v2} is passive, see Theorem 6 in \cite{Chua1971}. Furthermore, we call $W(\varphi)$ the \textit{memductance} of the memristor
and note that, by Assumption~\ref{assumption 2}, the memductance is always positive.  
Then, as follows from time-differentiation of \eqref{eqn:flux controlled}, the following dynamical system describes the memristor:
\begin{equation}
\begin{aligned}
    \tfrac{d}{dt} \varphi &= V,  \quad
   I = W(\varphi) V. \label{eqn:flux controlled v2}
\end{aligned}
\end{equation} 
From the dynamics~\eqref{eqn:flux controlled v2}, it is clear that the memductance value of the memristor changes when external (voltage) stimuli are applied.
In particular, we say that the memristor is \emph{tunable} as its memductance value can be steered by choosing an appropriate input voltage.  Moreover, the memristor has \textit{memory} in the sense that its memductance value can be retained by supplying no input voltage thereafter. 

\begin{assumption}\label{assumption W strict monotone}
    The function $W(\cdot)$ is strictly monotone and $\beta-$Lipschitz continuous. Furthermore, there exist $W_\mathrm{max}\geq W_\mathrm{min}>0$ such that
    $$W_\mathrm{max} \geq W(x) \geq W_\mathrm{min}, \forall x\in\mathds{R}. $$
\end{assumption}
\vspace{0.2cm}

\begin{remark} 
    Assumptions~\ref{assumption 2} and \ref{assumption W strict monotone} are both satisfied by the HP memristor \cite{Strukov2008}, which is a frequently used example of a physical memristor \cite{Buscarino2012}, \cite{Olumodeji2017}.  
\end{remark}
\subsection{Memristive crossbar arrays}  
Having defined memristors, let us now consider a network of interconnected memristors. In particular, we consider a crossbar array with $n$ rows and $m$ columns, as seen in Figure~\ref{fig:memristive crossbar array}. 
At the point where row $k$ and column $j$ cross, there is a memristor  
satisfying Assumptions~\ref{assumption 2} and \ref{assumption W strict monotone}, and an associated switch. 
In the remainder of this paper we will, for simplicity,  assume all memristors to be the same, i.e. they are described by the same function $g(\cdot)$ as in \eqref{eqn:flux controlled} and hence have the same dynamics~\eqref{eqn:flux controlled v2}. 
    Note that the results presented in this paper can readily be extended to the case in which all memristors are different.

\begin{figure}[ht]
    \centering
    \begin{subfigure}[t]{0.5\textwidth}
        \centering
        \includegraphics[width=0.95\textwidth]{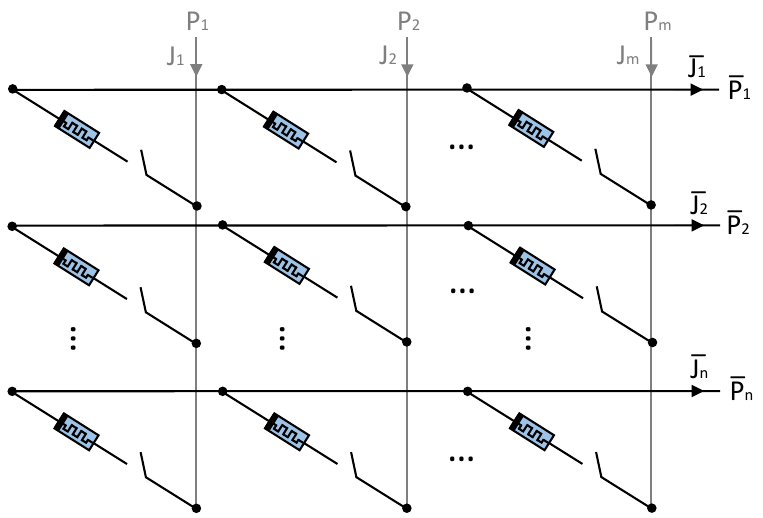}
        \caption{A three-dimensional representation}
        \label{fig:}
    \end{subfigure}
    \begin{subfigure}[t]{0.5\textwidth}
        \centering
        \includegraphics[width=0.4\textwidth]{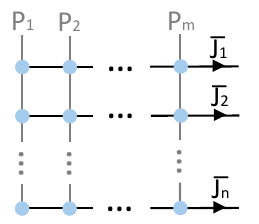}
        \caption{An abstract two-dimensional representation}
    \end{subfigure}
    \caption{A schematic depiction of a memristive crossbar array with switches.}
    \label{fig:memristive crossbar array}
\end{figure}

To model a memristive crossbar array, we will denote the voltage across, the current through, and the flux of the $(k,j)$-th memristor by  $V_{kj}$,  $I_{kj}$, and $\varphi_{kj}$, respectively.
The voltages associated with the memristors in the array can then be collected in a matrix as 
\begin{align}
   V &= \begin{bmatrix} V_{11} &  \hdots & V_{1m} \\
   \vdots & \ddots & \vdots \\
   V_{n1}   &\cdots & V_{nm} \end{bmatrix}  
\end{align}
and, similarly,  the currents and fluxes are collected in the matrices $I$ and $\varphi$. In addition, with some abuse of notation, we denote the matrix of memductance values by
\begin{align}
    {W}(\varphi) 
    &=\begin{bmatrix} W(\varphi_{11}) &  \hdots & W(\varphi_{1m}) \\
   \vdots & \ddots & \vdots \\
   W(\varphi_{n1})   &\cdots & W(\varphi_{nm}) \end{bmatrix}.
\end{align}
Using \eqref{eqn:flux controlled v2}, we collect and write the dynamics of the memristors in the array as 
\begin{equation}
\begin{aligned}
    \tfrac{d}{dt} \varphi &= V, \quad    I=W(\varphi) \odot V, \label{eqn: crossbar array}
\end{aligned}
\end{equation}
with $\odot$ denoting the Hadamard product.

Furthermore, we consider voltage potentials and currents at the terminals of our crossbar array, which we respectively denote by $P_j$ and $J_j$ at the beginning of the column bars, and $\Bar{P}_k$ and $\Bar{J}_k$ at the end of the row bars. Now,  we can collect the voltage potentials and currents at the column and row terminals into vectors $P,J\in\mathds{R}^m$ and $\Bar{P}, \Bar{J}\in\mathds{R}^n$, and 
let $\Tilde{P}\in\mathds{R}^{m+n}$ and $\Tilde{J}\in\mathds{R}^{m+n}$ respectively denote the combined vector of voltage potentials and currents at the terminals, i.e., 
\begin{align} 
    \Tilde{P} &= \begin{bmatrix}
        P_1 \ \cdots \ P_{m} \ \lvert \  \Bar{P}_1 \ \cdots  \ \Bar{P}_{n}
    \end{bmatrix}^\top = \begin{bmatrix}
        P^\top \ \lvert \ \Bar{P}^\top
    \end{bmatrix}^\top, \label{eqn: P^l} \\
     \Tilde{J} &= \begin{bmatrix}
        J_1 \ \cdots \ J_{m} \ \lvert \  -\Bar{J}_1 \ \cdots  \ -\Bar{J}_{n}
    \end{bmatrix}^\top = \begin{bmatrix}
        J^\top \ \lvert \ -\Bar{J}^\top
    \end{bmatrix}^\top. \label{eqn: J^l}
\end{align}
In addition, let $S$  be the switch matrix associated with the array, given by
\begin{align}
    S &= \begin{bmatrix} S_{11} &  \hdots & S_{1m} \\
   \vdots & \ddots & \vdots \\
   S_{n1}   &\cdots & S_{nm} \end{bmatrix}  
\end{align} 
 where $S_{kj}\in\{0,1\}$ is defined as 
\begin{align} \label{eqn: switch variables}
    S_{kj} &= \left\{
	\begin{array}{ll}
		1  & \mbox{if the $(k,j)$-th switch is closed}, \\
		0 & \mbox{if the $(k,j)$-th switch is open},
	\end{array}
\right. 
\end{align} 
for all $k\in[n]$ and $j\in[m]$.

Kirchhoff's current and voltage law tell us that 
\begin{equation}
\begin{aligned}
    \Tilde{J} &=  \begin{bmatrix}
        J \\ -\Bar{J}
    \end{bmatrix}= \begin{bmatrix}
        (S\odot I)^\top \mathds{1}_n \\ -(S\odot I)\mathds{1}_m
    \end{bmatrix}, \\
    V&=S \odot (\mathds{1}_n P^\top - \Bar{P}\mathds{1}_m^\top), \label{eqn: Kirchhoff's laws}
\end{aligned}
\end{equation} 
respectively. 
Combining the dynamics~\eqref{eqn: crossbar array} with Kirchhoff's laws \eqref{eqn: Kirchhoff's laws}, the dynamics of the array can be written as 
\begin{equation}\label{eqn:general crossbar array}
\begin{array}{ll}
    \tfrac{d}{dt} \varphi &= S \odot (\mathds{1}_n P^\top - \Bar{P} \mathds{1}_m^\top), \\
    \Tilde{J}&= \begin{bmatrix}
        (W(\varphi)\odot S\odot  (\mathds{1}_n P^\top-\Bar{P}\mathds{1}_m^\top))^\top \mathds{1}_n \\ -(W(\varphi)\odot S\odot (\mathds{1}_n P^\top-\Bar{P}\mathds{1}_m^\top)) \mathds{1}_m
    \end{bmatrix}. 
\end{array}
\end{equation}
Note that, to obtain the second equation, we used that $S\odot S = S$, by definition of $S$.

Now, note that a crossbar array implements matrix-vector multiplication \cite{Heidema2024}. In particular, assuming that $\Bar{P}\!=\! 0$ it follows from \eqref{eqn:general crossbar array} that, for input voltage $P$ to the column bars, the measured output current $\Bar{J}$ at the row bars is given by
\begin{align}
    \Bar{J} &=  ( W(\varphi)\odot S\odot(\mathds{1}_{n} P^\top) \mathds{1}_n = ( W(\varphi)\odot S)P. \label{eqn: Jbar = MP}
\end{align}
In the remainder of this paper, we will consider the row bars of the crossbar array to be grounded, meaning that 
$\Bar{P} = 0$. 
The dynamics~\eqref{eqn:general crossbar array} then becomes
\begin{align} \label{eqn:general crossbar array v2}
    \tfrac{d}{dt} \varphi &= S \odot (\mathds{1}_n P^\top), \quad \Bar{J} = ( W(\varphi)\odot S)P.
\end{align} 
Here, we can view \eqref{eqn:general crossbar array v2} as a dynamical system with input $P$, state $\varphi$, and output $\Bar{J}$.
For given switch settings, we denote the state trajectory of \eqref{eqn:general crossbar array v2}, at time $t$, for initial condition $\varphi(0)=\varphi_0$ and input voltages $P:[0,\infty)\rightarrow \mathds{R}^{m}$ as  $\varphi(t; \varphi_0,P)$. The corresponding (output) currents are denoted by $\Bar{J}(t; \varphi_0,P)$.

\subsection{Circuit implementation of ANN} \label{sec:hardware implementation}
\begin{figure*}
    \centering
    \includegraphics[width=0.95\textwidth]{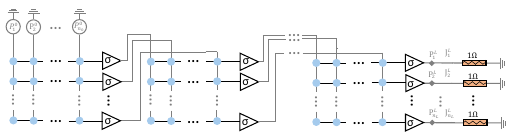}
    \caption{Neural network implementation.}
    \label{fig:ANN}
\end{figure*}
The structure of a feed-forward artificial neural network can naturally be captured by the interconnection of multiple crossbar array circuits \cite{Bayat2018}, as depicted in Figure~\ref{fig:ANN}. Here, we used the schematic representation of memristive crossbar arrays as in Figure~\ref{fig:memristive crossbar array}. In particular, memristive crossbar arrays can be used to implement synapses and Current-Controlled Voltage Sources (CCVSs) can be used to implement neurons. 
Hereto, consider the interconnection of $L$ memristive crossbar arrays as seen in Figure~\ref{fig:ANN}, where the $\ell$-th array has $n_{\ell}$ rows and $n_{\ell-1}$ columns for $\ell\in[L]$. Here, each synaptic weight $\hat{M}_{kj}^\ell$ is represented by the instantaneous memductance value of a memristor as 
\begin{equation} \label{eqn:wij^{l}}     
\hat{M}_{kj}^\ell  = W(\varphi_{kj}^{\ell}), \end{equation} for all $k\in [n_\ell]$ and $j\in [n_{\ell-1}]$. 
Furthermore, the activation function is implemented by a current-controlled voltage source that, based on the current $\bar{J}^\ell_{k}$ at the end of the $k$-th row bar of the $\ell$-th crossbar array, gives voltage $P_k^\ell=\sigma(\bar{J}^\ell_{k})$ to the $k$-th column of the $(\ell+1)$-th crossbar array, for all $\ell\in[L]$ and $k\in [n_\ell]$. 
In addition, the current-controlled voltage source ensures that $\Bar{P}^\ell=0$ for all $\ell\in[L-1]$. 
From \eqref{eqn:general crossbar array v2}, the dynamics of the interconnection of the $L$ crossbar arrays can be derived as
\begin{equation}
\begin{aligned} \label{eqn: hardware ANN}
    P^0 &= u, \\
    \tfrac{d}{dt} \varphi^\ell &= S^\ell \odot (\mathds{1}_{n_\ell} (P^{\ell-1})^\top), \\
    \bar{J}^\ell &= (W(\varphi^\ell)\odot S^\ell) P^{\ell-1}, \quad \ell\in\{1,\ldots,L\},
    \\
    P^\ell &= \sigma^\ell(\bar{J}^\ell), \\
    y &= P^L.
\end{aligned}
\end{equation} 
Here, $\varphi^\ell$ and $S^\ell$ denote the flux and switch matrices associated with the $\ell$-th crossbar array, respectively. 
The hardware implementation of the network hence describes a function from $u$ to $y$. 
 Here, note that the inputs to the network are represented by voltage sources $P_1^0,\ldots,P_{n_0}^0$ and the corresponding outputs are the voltage potentials $P_1^L,\ldots,P_{n_L}^L$. 
 The resistors with a resistance of $1\Omega$, see Figure~\ref{fig:ANN}, are such that $J_k^L$ is equal in value to $P_k^L$ for all $k\in[n_L]$. In this paper, we assume that the currents $J_1^L, \ldots, J_{n_L}^L$ can be measured, and hence the voltage potentials $P_1^L,\ldots,P_{n_L}^L$ can be computed. In the remainder of this paper, when we refer to (measuring) the output $y(T)$ for some time $T$, we therefore mean the (measured) current $J^L(T)$.     
 Furthermore, note the similarity between the equations describing the hardware implementation~\eqref{eqn: hardware ANN} and the ANN equations~\eqref{eqn: neuron eqns}. 

 \begin{remark}\label{remark:CCVS}
In Figure~\ref{fig:ANN}, the activation function is denoted by a triangle labeled $\sigma$ and is implemented by a current-controlled voltage source. 
This can be done using the two circuits described in the following.
The first circuit consists of a combination of resistors and operational amplifiers (see \cite{Webster2010}). This circuit takes input current $\Bar{J}_k^{\ell}$ and generates an output voltage $X$ that is equal in value to  $\Bar{J}_k^{\ell}$, similar to \cite[Fig. 6]{Bayat2017}. Here, the operational amplifier ensures that $\Bar{P}_k^{\ell}=0$.  
The second circuit consists of exponential operational amplifiers, inverting amplifiers, summing amplifiers, and logarithmic amplifiers (see \cite{Webster2010}). This circuit generates an output voltage $P_k^\ell = \sigma(X)$. 
An example of this can be found in \cite[Fig. 1B]{Bao2017}, where the circuit gives output voltage potential 
    $v_0 = \tanh(v_i)$
for input voltage potential $v_i$.
The series interconnection of the two aforementioned  circuits then implements a current-controlled voltage source \cite{Desoer1969}.      
 \end{remark}

\begin{remark} \label{remark: pair of memristors}
    Note that the memductance, and hence the synaptic weight value, is always positive by Assumption~\ref{assumption 2}. The results in this paper can immediately be extended to implement both positive and negative synaptic weights. See also Section~\ref{sec: mnist example} for an example on this.     
    To implement negative weights, the weights $\hat{M}_{kj}^\ell$ are represented by a \textit{pair} of memristors rather than a single one \cite{Zamanidoost2015}, \cite{Alibart2013}. 
    The crossbar arrays need to be extended with $n_\ell$ rows so that the first $n_\ell$ rows correspond to a memductance matrix, say $W(\varphi^+)$, and the last $n_\ell$ rows correspond to a memductance matrix, say $W(\varphi^-)$, such that 
    \begin{equation} \label{eqn: pair of memristors}
        \hat{M}_{kj}^\ell  = W(\varphi_{kj}^{\ell,+}) - W(\varphi_{kj}^{\ell,-}).
    \end{equation} 
   Now, the first circuit described in Remark~\ref{remark:CCVS} needs to be altered such that it generates an output voltage $X$ that is equal in value to $\bar{J}_k^{\ell,+}-\bar{J}_k^{\ell,-}$ from input
   currents $\bar{J}_k^{\ell,+}$ and $\bar{J}_k^{\ell,-}$. Here, the input
   current $\bar{J}_k^{\ell,+}$ and $\bar{J}_k^{\ell,-}$ respectively refer to the currents at the end of row $k$ and $n+k$.
   This can be achieved by adding additional operational amplifiers to the circuit implementation, %resulting in a circuit 
   similar to \cite[Fig. 6]{Bayat2017}.
\end{remark}

\section{PROBLEM STATEMENT} \label{sec: problem statement}
The goal of this paper is to study the hardware implementation of ANNs as introduced in Section~\ref{sec:hardware implementation}. In particular, we investigate three problems. First, we consider the inference problem, which amounts to evaluating the ANN at a given input, using only input-output signals of its hardware implementation. 
Second, we study the reading problem, that is, retrieving the synaptic weight values of the ANN on the basis of input-output signals of the corresponding hardware implementation. 
Next, we study the writing problem, which concerns steering the synaptic weight values of the ANN, and hence the instantaneous memductance values of its hardware implementation, to desired values,  using 
input-output signals of the hardware implementation.

There are some difficulties to be wary of when solving the aforementioned problems. 
The first difficulty arises due to the tunability property of memristors. Namely, the memductance values change as the circuit is subjected to a nonzero input voltage, which means that the circuit implements a different ANN with different synaptic weight matrices $\hat{M}^\ell$ after a voltage signal is applied. This poses some difficulties in solving the inference problem. However, note that the very same property is exploited in the writing problem. 
The second difficulty stems from the memductance values not being measurable directly. Instead, information on the currents $\Bar{J}^\ell$ and the voltage potentials $P^{\ell-1}$ needs to be utilized to derive the memductance values $W(\varphi^\ell)$ as in \eqref{eqn: hardware ANN}. 
To this end, in the remainder of this paper, we assume that we can measure the currents $\Bar{J}^{\ell}$ for all $\ell\in[L]$. 
In addition, we assume the activation function $\sigma(\cdot)$ to be known.
With that, the voltage potentials $P^{\ell-1}$ can be derived as 
\begin{equation}\label{eqn: voltage potential}
    P^{\ell-1} = \sigma^{\ell-1}(\Bar{J}^{\ell-1}).
\end{equation}  
Here,
note that it is impossible to compute the memductance value for $P^{\ell-1}=0$.

Now, the inference problem can be formulated as follows.
\begin{problem}[Inference problem]\label{inference problem}
Let matrices $\hat{M}^\ell \in \mathds{R}^{n_\ell \times n_{\ell-1}}$, for $\ell \in [L]$, be given. For $S^\ell = 1$ for all $\ell\in[L]$, find $T > 0$ and encoding and decoding functions
\begin{align}
    \Lambda_{\text{enc}} : \hat{u}\mapsto u \quad \Lambda_{\text{dec}} : y\mapsto \hat{y}
\end{align}
with $u:[0,T]\rightarrow\mathds{R}^{n_0}$ and $y:[0,T]\rightarrow\mathds{R}^{n_L}$, in such a way that, for $\varphi^\ell_0$ satisfying $W(\varphi_0^\ell)= \hat{M}^\ell$, the solution to \eqref{eqn: hardware ANN} for $u(t) = \Lambda_{\text{enc}}(\hat{u})(t)$ is such that 
\begin{enumerate}
    \item $\Lambda_{\text{dec}}(y)=h(\hat{u})$;
    \item $\varphi^\ell(T; \varphi_0^\ell,P^{\ell-1}) =\varphi^\ell_0$ for all $\ell\in[L]$.
\end{enumerate} 
\end{problem}

Here, note that 1) implies that the output $\hat{y}= h(\hat{u})$ of the artificial neural network for some input $\hat{u}\in\mathds{R}^{n_0}$ can be inferred using measurements of the electrical circuit~\eqref{eqn: hardware ANN}. 
Recall that the memductance values of the memristors in the network change when external stimuli are applied. However, 2) implies that the experiment we conduct is \emph{non-invasive} in the sense that the memductance values before and after the experiment are the same, i.e., $W(\varphi^\ell(T)) = W(\varphi_0^\ell) = \hat{M}^\ell$. 
This implies that the input signal $u$ to the circuit is such that we are able to continue computing with the function $h(\cdot)$. 
Furthermore, note that the input signals $u$ correspond to voltage potentials $P_1^0,\ldots, P_{n_0}^0$, which can be chosen directly through voltage sources, and the voltage potentials $P_1^L,\ldots, P_{n_L}^L$ coincide with the corresponding outputs $y$.

In Problem~\ref{inference problem}, we have assumed that the desired weights $\hat{M}^\ell$ are given.
These weights would typically be obtained by training the ANN and need to be transferred to the hardware implementation~\eqref{eqn: hardware ANN}. 
To this end, a relevant problem is how to retrieve the values of $W(\varphi_0^\ell)$ on the basis of measurements of the external signals. 
We refer to this problem as \textit{reading}. 
Here, recall that the memductance values cannot be measured directly. 
The problem of reading the memductance matrices can now be formulated as follows.

\begin{problem}[Reading Problem] \label{reading problem}
Find $T\geq 0$, $P^0:[0,T]\rightarrow\mathds{R}^{n_0}$, and 
$S^\ell:[0,T]\rightarrow\{0,1\}^{n_\ell \times  n_{\ell-1}}$ for $\ell\in[L]$, such that, for any initial conditions $\varphi^1_0,\ldots,\varphi^L_0$, the following two properties hold for all $\ell\in[L]$:
\begin{enumerate}
    \item $\Bar{J}^\ell(t;\varphi_0^\ell,P^{\ell-1}) = \Bar{J}^\ell(t;\Bar{\varphi}_0^\ell,P^{\ell-1})$ for all $t\in[0,T]$ $\implies W(\varphi_0^\ell)=W(\Bar{\varphi}_0^\ell)$;
    \item $\varphi^\ell(T; \varphi_0^\ell,P^{\ell-1})=\varphi^\ell_0$.
\end{enumerate} 
\end{problem}

In other words, the reading problem is to choose the input voltage sources $P^0$ and the switch settings on the time-interval $[0,T]$ in such a way that the initial memductance matrices $W(\varphi^\ell_0)$ can be uniquely determined from the measured currents $\Bar{J}^{\ell}$.
Recall from the inference problem that the second item implies that $W(\varphi^\ell(T))=W(\varphi^\ell_0)=\hat{M}^\ell$.

Finally, to introduce the third and final problem studied in this paper, we note that the inference problem assumes that the memductance matrices are initialized precisely at the weights of the artificial neural network. In case not all memductances are initialized correctly, which can be verified with a solution to the reading problem, a relevant question is how to steer these memductances to their desired values. Recall that the desired memductance values would typically result from training the ANN. 
The issue is now formalized as follows, and is referred to as the \emph{writing} problem.

\begin{problem}[Writing problem] \label{writing problem}
Let $\epsilon>0$ and let $W_d^\ell\in\mathds{R}_{>0}^{n_\ell\times n_{\ell-1}}$ be the desired memductance matrix for $\ell\in[L]$. Find $T\geq 0$, $P^0:[0,T]\rightarrow\mathds{R}^{n_0}$, and $S^\ell:[0,T]\rightarrow\{0,1\}^{n_\ell \times  n_{\ell-1}}$  in such a way that 
    \begin{equation}
        \left|W^\ell_{d,kj} - \left[W(\varphi^\ell(T; \varphi_0^\ell,P^{\ell-1}))\right]_{kj}\right|\leq \epsilon
    \end{equation}
    for all $\ell\in[L], k\in [n_\ell],$ and $j\in [n_{\ell-1}] $.
\end{problem}

\begin{remark}
    As seen in \cite{Heidema2024}, the switches in the crossbar array are crucial in steering the memductances to desired values. 
For the reasons mentioned there, we have included a switch accompanying each memristor in the memristive crossbar arrays, contrary to, e.g., the arrays in  \cite{Yildiz2019}.
\end{remark}

\section{Inference} \label{sec:inference}
In order to perform the computation $\hat{y}=h(\hat{u})$ for some $\hat{u}$ in the circuit implementation of the ANN, we will consider block signals. Let $\tau>0$ and consider the block signal 
\begin{equation}
    Q(t) = \left\{
        \begin{array}{rl}
            0 & \quad t<-2\tau \\
            -1 & \quad t \in [-2\tau, -\tau)  \\ 
            1 & \quad t \in [-\tau, \tau] \\ 
            -1 & \quad t \in (\tau, 2\tau] \\
            0 & \quad t>2\tau.  
        \end{array}
    \right. \label{eqn: V reading}
\end{equation} 
A depiction of the block signal~\eqref{eqn: V reading} can be found in Figure~\ref{fig:block signal}. 
Note that the signal $Q(\cdot)$ is odd on the interval $[-2\tau,0]$ and odd on the interval $[0,2\tau]$. This motivates the following lemma.

\begin{figure}
    \centering
    \includegraphics[width=0.6\linewidth]{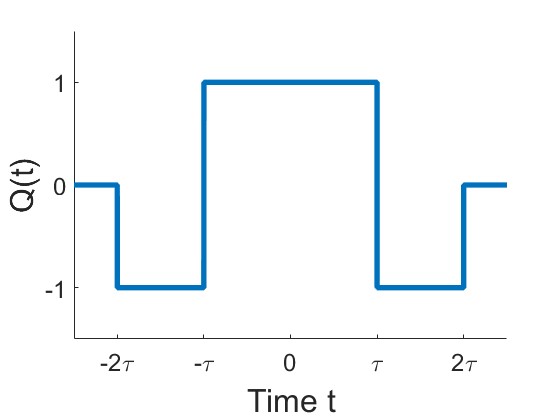}
    \caption{Block signal $Q(t)$ as in~\eqref{eqn: V reading}.}
    \label{fig:block signal}
\end{figure}

\begin{lemma}\label{lemma: odd voltage}
Consider the dynamics~\eqref{eqn: hardware ANN}. Let  $\ell\in[L]$ and $T>0$.  Let the switch matrix $S^\ell$ be constant for $t\in[0,T]$. 
    Furthermore, let $P^{\ell-1}:[0,T]\rightarrow \mathds{R}^{n_{\ell-1}}$ be 
    \begin{enumerate}
        \item odd on the interval  $[0,\frac{T}{2}]$, \label{property: odd on interval 1}
        \item odd on the interval  $[\frac{T}{2}, T]$. \label{property: odd on interval 2}
    \end{enumerate} 
    For each $\varphi_0^\ell\in\mathds{R}^{n_\ell\times n_{\ell-1}}$, $\varphi^\ell(\ \cdot \ ; \varphi_0^\ell,P^{\ell-1})$ is even on the intervals $[0,\frac{T}{2}]$ and $[\frac{T}{2}, T]$ and, moreover, $P^\ell= \sigma^\ell(\Bar{J}^\ell(\ \cdot \ ; \varphi_0^\ell,P^{\ell-1}))$ satisfies Properties~\ref{property: odd on interval 1}) and \ref{property: odd on interval 2}) above.     
\end{lemma}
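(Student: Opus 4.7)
The plan is to integrate the $\varphi$-dynamics explicitly, transfer the odd-interval symmetry of $P^{\ell-1}$ to an even-interval symmetry of $\varphi^\ell$, and then push this through the product $\bar{J}^\ell = (W(\varphi^\ell)\odot S^\ell)P^{\ell-1}$ and the odd activation $\sigma$.

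First, since $S^\ell$ is constant on $[0,T]$, integrating the middle equation of \eqref{eqn: hardware ANN} gives
\begin{equation}
    \varphi^\ell(t;\varphi_0^\ell,P^{\ell-1}) = \varphi_0^\ell + S^\ell \odot \bigl(\mathds{1}_{n_\ell}\,\Phi(t)^\top\bigr),\qquad \Phi(t) := \int_0^t P^{\ell-1}(s)\,ds.
\end{equation}
Therefore it suffices to prove that the vector-valued primitive $\Phi$ is even on $[0,T/2]$ and on $[T/2,T]$, because the Hadamard product with the (time-independent) $S^\ell\odot\mathds{1}_{n_\ell}(\,\cdot\,)^\top$ preserves this symmetry componentwise, and then $W(\cdot)$ applied elementwise inherits evenness on each subinterval from $\varphi^\ell$.

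Next, I establish the two key subclaims about $\Phi$. Using the oddness of $P^{\ell-1}$ on $[0,T/2]$ and the substitution $u = T/2 - s$, the identity $P^{\ell-1}(s) = -P^{\ell-1}(T/2-s)$ (which extends from $[0,T/4]$ to all of $[0,T/2]$ by the same substitution) yields $\int_0^{T/2}P^{\ell-1}(s)\,ds = 0$, i.e.\ $\Phi(T/2) = 0$. For any $t\in[0,T/2]$, writing $\Phi(t) - \Phi(T/2-t) = -\int_{t}^{T/2-t} P^{\ell-1}(s)\,ds$ (say for $t\le T/4$) and again substituting $u=T/2-s$ gives that this integral equals its own negative, hence zero, proving $\Phi(t) = \Phi(T/2-t)$; that is, $\Phi$ is even on $[0,T/2]$. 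The same argument with the shift $s\mapsto 3T/2-s$ and the starting point $\Phi(T/2)=0$ shows first that $\int_{T/2}^{T}P^{\ell-1}(s)\,ds=0$ and then that $\Phi(t) = \Phi(3T/2-t)$ on $[T/2,T]$. Hence $\varphi^\ell(\,\cdot\,;\varphi_0^\ell,P^{\ell-1})$ is even on both subintervals.

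Finally, since $W(\cdot)$ is a scalar function applied elementwise, $W(\varphi^\ell(\,\cdot\,))$ is even on both subintervals as well, and the Hadamard product $W(\varphi^\ell)\odot S^\ell$ preserves this. Then
\begin{equation}
\bar{J}^\ell(T/2-t) = \bigl(W(\varphi^\ell(T/2-t))\odot S^\ell\bigr) P^{\ell-1}(T/2-t) = \bigl(W(\varphi^\ell(t))\odot S^\ell\bigr)(-P^{\ell-1}(t)) = -\bar{J}^\ell(t),
\end{equation}
so $\bar{J}^\ell$ is odd on $[0,T/2]$; the identical computation with $3T/2-t$ handles $[T/2,T]$. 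Applying $\sigma^\ell$, which is elementwise odd by Assumption~\ref{assumption 1}, preserves oddness on each subinterval and yields the claim for $P^\ell = \sigma^\ell(\bar{J}^\ell)$.

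The only step that requires real care is the substitution argument for $\Phi$: one must keep track of the definition of ``odd on $[a,b]$'' (which is a priori stated only on the left half $[a,(a+b)/2]$) and verify that the defining identity extends to the full subinterval before the substitution is performed. Once that bookkeeping is in place, the remainder is algebraic.
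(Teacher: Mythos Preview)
Your proof is correct and follows essentially the same approach as the paper's: integrate the flux dynamics, use the substitution $s\mapsto T/2-s$ (respectively $s\mapsto 3T/2-s$) to convert the odd-on-subinterval symmetry of $P^{\ell-1}$ into an even-on-subinterval symmetry of $\varphi^\ell$, and then combine even $W(\varphi^\ell)$ with odd $P^{\ell-1}$ and the odd activation $\sigma$. The only difference is presentational---you work in matrix form via the primitive $\Phi(t)=\int_0^t P^{\ell-1}(s)\,ds$, whereas the paper argues componentwise with indices $k,j$ and the individual currents $I_{kj}^\ell$; your remark about $\Phi(T/2)=0$ is not needed for the evenness on $[T/2,T]$ but does no harm.
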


The proof of Lemma~\ref{lemma: odd voltage} can be found in Appendix~\ref{Proof of odd voltage signals}. 
Note that Lemma~\ref{lemma: odd voltage} shows that certain properties of the voltage signal $P^\ell$ are preserved throughout all layers of the circuit implementation of the ANN, for $\ell\in[L]$.
This allows us to prove the following theorem, showing that the block signal \eqref{eqn: V reading} can be used to solve the inference problem.

\begin{theorem} \label{theorem: inference}
Let $T=4\tau$. Then, the encoding and decoding functions
\begin{align}
    \Lambda_{\text{enc}}(\hat{u}) &:=  \hat{u} Q(t-\tfrac{T}{2}) \quad \text{and} \quad 
    \Lambda_{\text{dec}}(y) := y\left(\tfrac{T}{2}\right) 
\end{align}  
     solve Problem~\ref{inference problem}. Here, $Q(\cdot)$ is the block function as in \eqref{eqn: V reading}. 
\end{theorem}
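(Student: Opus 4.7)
The strategy is to propagate the symmetry of the block signal $Q$ layer by layer via Lemma~\ref{lemma: odd voltage}, and then exploit the fact that at the midpoint $t=T/2$ each flux trajectory has returned to its initial value, so that the circuit instantaneously realises the ANN map $h$. First I would verify that $P^0(t) = \hat{u}\,Q(t-T/2)$ is odd on $[0,T/2]$ and on $[T/2,T]$ in the sense of the paper's definition: on $[0,T]$ the shifted signal takes values $-\hat{u}, +\hat{u}, -\hat{u}$ on the sub-intervals $[0,\tau), [\tau, 3\tau], (3\tau, 4\tau]$, which is symmetric about the midpoints $\tau$ and $3\tau$ of the two half-intervals $[0,2\tau]$ and $[2\tau,4\tau]$.

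With $S^\ell = 1$ as specified in Problem~\ref{inference problem}, Lemma~\ref{lemma: odd voltage} applied to $\ell=1$ yields that $\varphi^1$ is even on $[0,T/2]$ and on $[T/2,T]$ and that $P^1 = \sigma^1(\bar{J}^1)$ inherits the two odd-on-interval properties required to reapply the lemma to $\ell = 2$. An induction over $\ell$ then shows that for every $\ell \in [L]$ the flux $\varphi^\ell$ is even on both intervals and $P^\ell$ is odd on both. Evaluating evenness at the endpoints of $[0,T/2]$ and $[T/2,T]$ gives
\[
\varphi^\ell(0) \;=\; \varphi^\ell(T/2) \;=\; \varphi^\ell(T)
\]
for each $\ell \in [L]$, which is precisely item~2 of Problem~\ref{inference problem}.

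For item~1, the key observation is that at $t = T/2$ the input satisfies $P^0(T/2) = \hat{u}\,Q(0) = \hat{u}$ and, by the preceding display, the memductance matrices have returned to their initial values, $W(\varphi^\ell(T/2)) = \hat{M}^\ell$. A straightforward induction on $\ell$ using the dynamics~\eqref{eqn: hardware ANN} then gives $P^\ell(T/2) = \sigma^\ell(\hat{M}^\ell P^{\ell-1}(T/2)) = a^\ell$, where $a^\ell$ denotes the ANN activity defined in~\eqref{eqn: neuron eqns}. Setting $\ell = L$ yields $\Lambda_{\text{dec}}(y) = y(T/2) = a^L = h(\hat{u})$.

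I do not expect a deep obstacle: once Lemma~\ref{lemma: odd voltage} is available, Theorem~\ref{theorem: inference} is essentially an induction across the $L$ layers, anchored by the symmetry of $Q$ and the single identity $\varphi^\ell(T/2) = \varphi^\ell_0$. The only bookkeeping subtlety is that $Q$ takes nonzero values at the jump points $\pm\tau$, so the pointwise odd-on-interval relation $f(x) = -f(a+b-x)$ fails at an isolated point in each half-interval; this does not perturb the absolutely continuous flux trajectory produced by~\eqref{eqn: hardware ANN}, and is in any case absorbed into the proof of Lemma~\ref{lemma: odd voltage} rather than of Theorem~\ref{theorem: inference}.
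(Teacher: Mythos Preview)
Your proposal is correct and follows essentially the same approach as the paper: propagate the two odd-on-interval properties of $P^0$ through all layers via Lemma~\ref{lemma: odd voltage} to obtain $\varphi^\ell(0)=\varphi^\ell(T/2)=\varphi^\ell(T)$, and then evaluate the circuit equations at $t=T/2$ to recover the ANN recursion. The paper's proof is slightly less explicit about the induction over $\ell$ in applying Lemma~\ref{lemma: odd voltage} and does not comment on the jump-point subtlety you mention, but the logical structure is identical.
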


\begin{proof}
We have to show that items~\ref{property: odd on interval 1} and \ref{property: odd on interval 2} of Problem~\ref{inference problem} hold. To do so, consider $\ell\in[L]$, $k\in [n_\ell]$, $j\in~[n_{\ell-1}]$, and let $\varphi^\ell_0$ be such that $W(\varphi_0^\ell)=\hat{M}^\ell$.
As the function $t\mapsto Q(t-\tfrac{T}{2})$ is odd on the intervals $[0,\tfrac{T}{2}]$ and $[\tfrac{T}{2},T]$, it follows from Lemma~\ref{lemma: odd voltage} that the input 
$u(t) = \Lambda_{\text{enc}}(\hat{u})(t)$ is such that 
    \begin{align}
        \varphi_{kj}^\ell \left( \frac{T}{2}-t\right) &= \varphi_{kj}^\ell(t), \ \forall t\in\left[0,\tfrac{T}{4}\right], \\
         \varphi_{kj}^\ell \left( \frac{3T}{2}-t\right) &= \varphi_{kj}^\ell(t), \ \forall t\in\left[\tfrac{3T}{4}, T\right].
    \end{align}
 In particular, this implies that 
\begin{equation} \label{eqn: phi0=phiT}
    \varphi_{kj}^\ell(0) = \varphi_{kj}^\ell(\tfrac{T}{2}) = \varphi_{kj}^\ell(T).  
\end{equation}
Hence, it holds that 
$\varphi^\ell(T; \varphi_0^\ell,P^{\ell-1})=\varphi^\ell_0$, i.e., item~\ref{property: odd on interval 2} holds.

    Now, by the design of the encoding block signal, we have
    \begin{equation}\label{eqn: activation 0}
        P_j^0(\tfrac{T}{2}) = \hat{u}_j.
    \end{equation}
   Using \eqref{eqn: hardware ANN} and \eqref{eqn: phi0=phiT}, the resulting current at time $\tfrac{T}{2}$ is given by
   \begin{align}
        \Bar{J}_k^\ell(\tfrac{T}{2}) 
        &=  \sum_{j=1}^{n_{\ell-1}} W(\varphi_{kj}^\ell(0)) P_j^{\ell-1}(\tfrac{T}{2}).
    \end{align}
    Since this holds for any $k$ and $j$, and since  $W(\varphi_0^\ell)=\hat{M}^\ell$, we hence find that 
    \begin{equation}
        \Bar{J}^\ell (\tfrac{T}{2}) = \hat{M}^\ell P^{\ell-1} (\tfrac{T}{2}).
    \end{equation}
   The resulting voltage into layer $\ell$ is then given by
    \begin{align}
        P^\ell(\tfrac{T}{2}) = \sigma^\ell(\Bar{J}^{\ell}(\tfrac{T}{2})) = \sigma^\ell\left(\hat{M}^\ell P^{\ell-1}(\tfrac{T}{2})\right).
    \end{align} 
This implies that
\begin{align}
    P^0(\tfrac{T}{2}) &=  \hat{u}, \\
    P^{\ell}(\tfrac{T}{2}) &= \sigma^\ell\left(\hat{M}^\ell P^{\ell-1}(\tfrac{T}{2})\right), \quad \ell\in[L], \\
    \hat{y}&= P^L(\tfrac{T}{2}).
\end{align}
 Hence, we conclude that $\Lambda_{\text{dec}}(y)=\hat{y}=h(\hat{u})$.  
\end{proof}

Theorem~\ref{theorem: inference} hence tells us that the inference problem, i.e., Problem~\ref{inference problem}, can be solved using input voltage signals of the form \eqref{eqn: V reading}. 

\section{READING} \label{sec:reading}
In this section, we again make use of the block signal~\eqref{eqn: V reading} and prove that it can be used to solve the reading problem, i.e., Problem~\ref{reading problem}. 
To do so, we first need the following. 

\begin{definition} \label{definition: path}
    A \textit{path $\gamma$ to the $(k,j)$-th memristor in layer $\ell$} is defined as 
    $$ \gamma = (\gamma_0, \gamma_1, \ldots, \gamma_\ell) $$
    with $\gamma_{\ell-1}=j, \gamma_{\ell}=k$, and for which $\gamma_\kappa\in[n_\kappa]$.  
\end{definition}

To a path, we can associate the corresponding switch settings, defined next.  
\begin{definition} \label{definition: switches according to path}
    Consider a path $\gamma$ to the $(k,j)$-th memristor in layer $\ell$. The \textit{switch settings corresponding to the path $\gamma$} are defined in the following way: for all $\kappa\in[\ell]$, set
    \begin{equation}
    S_{ab}^\kappa =  \left\{
        \begin{array}{ll}
            1 & \quad \text{if } a=\gamma_\kappa, b=\gamma_{\kappa-1}, \\
            0 & \quad \text{otherwise}. 
        \end{array}
    \right.   
\end{equation}
 Furthermore, if $\ell<L$, set $S^{\ell+1}=0$ and set $S^\kappa$ arbitrarily for $\kappa\in\{\ell+2,\ldots,L\}$. Moreover, the $(a,b)$-th memristor in layer $\kappa\in[\ell]$ is said to be \textit{on the path $\gamma$} if $S_{ab}^\kappa=1$.
\end{definition} 

Thus, a path defines a single current path through the circuit implementing the ANN, in which only one memristor is selected in each layer up to layer $\ell$. Moreover, subsequent layers are completely disconnected. 

The reading of the instantaneous memductance value of the $(k,j)$-th memristor in layer $\ell$ now goes via the memristors in layers $\kappa\in[\ell-1]$ which are on the path $\gamma$. 
In particular, the input voltage $P_{\gamma_0}^0$ travels through the $(\gamma_\kappa, \gamma_{\kappa-1})$-th memristor in layer $\kappa\in[\ell]$.
 To simplify notation, let $\phi_\gamma^\kappa, P_\gamma^\kappa$ and $\Bar{J}_\gamma^\kappa$ respectively denote the flux $\varphi^\kappa_{\gamma_\kappa,\gamma_{\kappa-1}}$, the voltage potential $P^\kappa_{\gamma_{\kappa-1}}$ and the current $\Bar{J}_{\gamma_{\kappa}}^\kappa$ for all memristors on the path $\gamma$ and with $\kappa\in[\ell]$. 
 In fact, when the path is clear from context, we completely omit the subscript $\gamma$. Using \eqref{eqn: hardware ANN}, we can express the dynamics of the memristors on the path as 
\begin{equation}\label{eqn: dynamics writing}
 \begin{array}{ll} 
     \dot{\phi}_\gamma^1 &= f^1(\phi_\gamma,P_\gamma^0), \quad  \Bar{J}_\gamma^1 = W(\phi_\gamma^1)f^1(\phi_\gamma, P_\gamma^0), \\ 
     &  \vdots \hspace{2.9cm} \vdots
     \\
    \dot{\phi}_\gamma^\ell &= f^\ell(\phi_\gamma,P_\gamma^0), \quad \Bar{J}_\gamma^\ell = W(\phi_\gamma^\ell)f^\ell(\phi_\gamma, P_\gamma^0),
 \end{array} 
 \end{equation}
  where the functions $f^\kappa$ with $\kappa\in[\ell-1]$ are recursively defined as
  \begin{equation}\label{eqn: f^k}
      \begin{array}{ll}
            f^1(\phi_\gamma,P_\gamma^0) &= P_\gamma^0, \\
      f^{\kappa+1}(\phi_\gamma,P_\gamma^0) &= P_\gamma^\kappa = \sigma(W(\phi_\gamma^\kappa)f^\kappa(\phi_\gamma,P_\gamma^0)),
      \end{array}
  \end{equation} 
  with 
  $$\phi_\gamma := \begin{bmatrix}
          \phi_\gamma^1 & \hdots & \phi_\gamma^{\ell}
      \end{bmatrix}^\top.$$
      These dynamics are collected as 
      \begin{equation}\label{eqn: dynamics writing - simplified & compact}
          \begin{array}{ll}
              \dot{\phi}_\gamma &= f(\phi_\gamma,P_\gamma^0), \\
      \Bar{J}_\gamma &= \text{diag}(W(\phi_\gamma^1), \ldots, W(\phi_\gamma^\ell)) f(\phi_\gamma,P_\gamma^0),
          \end{array}
      \end{equation} 
      where
\begin{align} 
      f(\phi_\gamma,P_\gamma^0) &:= \begin{bmatrix}
          f^1(\phi_\gamma,P_\gamma^0) & \hdots & f^\ell(\phi_\gamma,P_\gamma^0)
      \end{bmatrix}^\top, \\
      \Bar{J}_\gamma &= \begin{bmatrix}
           \Bar{J}_\gamma^1 & \hdots & \Bar{J}_\gamma^\ell
      \end{bmatrix}^\top.
  \end{align} 
  Here, adopting the notation introduced in Section \ref{sec:hardware implementation}, we denote the solution $\phi_\gamma$ to \eqref{eqn: dynamics writing - simplified & compact} for initial condition $\phi_\gamma(0)=\phi_{\gamma,0}$ and input voltage $P_\gamma^0$ as $\phi_\gamma(t; \phi_{\gamma,0},P_\gamma^0)$. The corresponding current is denoted by $\Bar{J}_\gamma(t; \phi_{\gamma,0},P_\gamma^0)$.
  In particular, let $\Bar{J}_\gamma^\ell(t;{\phi}_{\gamma,0}^\ell,P_\gamma^{\ell-1})$ denote the $\ell$-th element of the current $\Bar{J}_\gamma(\cdot)$ and note that, using \eqref{eqn: f^k} and \eqref{eqn: dynamics writing - simplified & compact}, it is given by      
\begin{align}\label{eqn:J_gamma^l}
         \Bar{J}_\gamma^\ell(t;{\phi}_{\gamma,0}^\ell,P_\gamma^{\ell-1})  &= W(\phi_\gamma^{\ell}(t)) P_\gamma^{\ell-1}(t). 
     \end{align}

Now, the following algorithm solves the problem of reading the instantaneous memductance value of the $(k,j)$-th memristor in layer $\ell$. 

\begin{algorithm}[H]  
\caption{Reading the $(k,j)$-th memristor in layer $\ell$}
\label{algorithm reading - v2}
Let $T=4\tau$. 
 Fix a path $\gamma$ to the $(k,j)$-th memristor in layer $\ell$. 
\begin{algorithmic}[1]
\State Set $S^\kappa$ as in Definition~\ref{definition: switches according to path} for all $\kappa\in[L]$
\State Apply 
$$P_\gamma^0(t)=Q(t-\tfrac{T}{2}) \: \text{for}  \: t\in[0, T]$$ 
with $Q(\cdot)$ as in \eqref{eqn: V reading} 
\State Measure $\Bar{J}_\gamma^{\ell}(\tfrac{T}{2})$ and $\Bar{J}_\gamma^{\ell-1}(\tfrac{T}{2})$ \label{step: measure J's}
\State Compute $W(\varphi_{kj}^{\ell}(0)) = \Bar{J}_\gamma^{\ell}(\tfrac{T}{2})/ \sigma(\Bar{J}_\gamma^{\ell-1}(\tfrac{T}{2}))$ \label{step: compute W}
\end{algorithmic}
\end{algorithm}

The claim that Algorithm~\ref{algorithm reading - v2} can be used to solve the problem of reading the $(k,j)$-th memristor in layer $\ell$ is formalized in the following theorem.
\begin{theorem}\label{thm: reading}
Let $\ell\in[L], k\in [n_\ell]$, and $j\in[n_{\ell-1}]$, and let 
$$\gamma=(\gamma_0, \gamma_1,\ldots,\gamma_\ell)$$ 
    be any path to the $(k,j)$-th memristor in layer $\ell$. 
Then, for any initial condition $\varphi^\kappa(0) = \varphi_0^\kappa\in\mathds{R}^{n_\kappa \times n_{\kappa-1}}$, the input voltage and switch settings described in Algorithm~\ref{algorithm reading - v2} are such that $\varphi^\kappa(T;\varphi_0^\kappa,P^{\kappa-1})=\varphi^\kappa_0$, $\kappa\in[L]$. 
Moreover,
$\Bar{J}_\gamma^\ell(t;\phi_{\gamma,0}^\ell,P_\gamma^{\ell-1}) = \Bar{J}_\gamma^\ell(t;\Bar{\phi}_{\gamma,0}^\ell,P_\gamma^{\ell-1})$ for all $t\in[0,T]$ implies that $ W(\phi_{\gamma,0}^\ell)=W(\Bar{\phi}_{\gamma,0}^\ell)$, 
    with $\phi_{\gamma,0}^\ell = \varphi_{\gamma_\ell,\gamma_{\ell-1}}^\ell(0)$ and $\Bar{J}_\gamma^\ell(\cdot)$ as in \eqref{eqn: dynamics writing}.
\end{theorem}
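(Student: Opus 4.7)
The plan is to split the proof into two independent claims corresponding to the two properties in the statement: first the non-invasiveness $\varphi^\kappa(T;\varphi_0^\kappa,P^{\kappa-1})=\varphi_0^\kappa$ for all $\kappa\in[L]$, and second the identifiability of $W(\phi_{\gamma,0}^\ell)$ from the current trace $\Bar{J}_\gamma^\ell$.

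For the non-invasiveness claim, I would split the layers into three regimes based on the path switch configuration from Definition~\ref{definition: switches according to path}. For any memristor in a layer $\kappa\in[\ell]$ that is \emph{not} on the path, the corresponding entry of $S^\kappa$ is zero, so \eqref{eqn: hardware ANN} gives $\tfrac{d}{dt}\varphi_{ab}^\kappa=0$ directly, and the flux is preserved trivially. For the memristors \emph{on} the path in layers $\kappa\in[\ell]$, I would apply Lemma~\ref{lemma: odd voltage} layer by layer: since $P_\gamma^0(t)=Q(t-\tfrac{T}{2})$ is by construction odd on $[0,\tfrac{T}{2}]$ and odd on $[\tfrac{T}{2},T]$, Lemma~\ref{lemma: odd voltage} gives evenness of $\phi_\gamma^1$ on these intervals and, crucially, that $P_\gamma^1=\sigma(\bar J_\gamma^1)$ inherits the same two oddness properties; inductively the hypothesis of Lemma~\ref{lemma: odd voltage} is preserved in each subsequent layer up to $\ell$. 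In particular evenness at the left endpoint of $[0,\tfrac{T}{2}]$ together with evenness on $[\tfrac{T}{2},T]$ gives $\phi_\gamma^\kappa(0)=\phi_\gamma^\kappa(\tfrac{T}{2})=\phi_\gamma^\kappa(T)$ for every $\kappa\in[\ell]$. Finally, for layers $\kappa>\ell$, the construction sets $S^{\ell+1}=0$, so the $(\ell+1)$-th flux is constant and $\bar J^{\ell+1}\equiv 0$, which propagates via $P^{\ell+1}=\sigma^{\ell+1}(\bar J^{\ell+1})=0$ (using $\sigma(0)=0$, which holds because $\sigma$ is odd) to give $V^\kappa\equiv 0$ for all $\kappa\geq\ell+2$ regardless of the arbitrary choice of $S^\kappa$, so all those fluxes are frozen as well.

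For the observability claim I would exploit equation \eqref{eqn:J_gamma^l}, which gives $\bar J_\gamma^\ell(t)=W(\phi_\gamma^\ell(t))P_\gamma^{\ell-1}(t)$, and evaluate at the single time $t=\tfrac{T}{2}$. By the evenness established above, $\phi_\gamma^\ell(\tfrac{T}{2})=\phi_{\gamma,0}^\ell$ in the first trajectory and $\phi_\gamma^\ell(\tfrac{T}{2})=\bar\phi_{\gamma,0}^\ell$ in the second. Equality of the two current traces at $t=\tfrac{T}{2}$ thus reduces to
\begin{equation}
\bigl(W(\phi_{\gamma,0}^\ell)-W(\bar\phi_{\gamma,0}^\ell)\bigr)\,P_\gamma^{\ell-1}(\tfrac{T}{2})=0.
\end{equation}
To conclude it remains to show $P_\gamma^{\ell-1}(\tfrac{T}{2})\neq 0$. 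This I would do by a short induction on $\kappa$: $P_\gamma^0(\tfrac{T}{2})=Q(0)=1\neq 0$; if $P_\gamma^{\kappa-1}(\tfrac{T}{2})\neq 0$, then $\bar J_\gamma^\kappa(\tfrac{T}{2})=W(\phi_\gamma^\kappa(\tfrac{T}{2}))P_\gamma^{\kappa-1}(\tfrac{T}{2})\neq 0$ by Assumption~\ref{assumption 2} ($W>0$), and then $P_\gamma^\kappa(\tfrac{T}{2})=\sigma(\bar J_\gamma^\kappa(\tfrac{T}{2}))\neq 0$ because the strictly monotone odd function $\sigma$ vanishes only at the origin.

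The main obstacle I anticipate is organizing the first claim cleanly: one needs to verify that the hypotheses of Lemma~\ref{lemma: odd voltage} are recursively available along the selected path even though neighboring rows/columns of the same layer carry either zero voltage or are disconnected by open switches, and simultaneously to argue that layers beyond $\ell$ contribute nothing to the flux update. Once this bookkeeping is set up, the rest is essentially a one-line algebraic manipulation at $t=\tfrac{T}{2}$ together with the elementary nonvanishing argument for $P_\gamma^{\ell-1}(\tfrac{T}{2})$.
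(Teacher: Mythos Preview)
Your proposal is correct and follows essentially the same approach as the paper: non-invasiveness is obtained by combining the trivial constancy of off-path fluxes with Lemma~\ref{lemma: odd voltage} along the path, and identifiability follows by evaluating \eqref{eqn:J_gamma^l} at $t=\tfrac{T}{2}$ together with the inductive nonvanishing argument for $P_\gamma^{\ell-1}(\tfrac{T}{2})$. Your explicit treatment of layers $\kappa\geq\ell+2$ via the propagation $S^{\ell+1}=0\Rightarrow\bar J^{\ell+1}=0\Rightarrow P^{\ell+1}=\sigma(0)=0$ is in fact a bit more careful than the paper, which subsumes this under the single remark that off-path memristors see no voltage.
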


\begin{proof}
    To simplify the notation in this proof, we let $\phi, \phi^\kappa,$ and $\phi_0^\kappa$ respectively denote $\phi_\gamma, \phi_\gamma^\kappa,$ and $\phi_{\gamma,0}^\kappa$ for $\kappa\in[\ell]$.  
    Now, consider any layer $\kappa\in[L]$ and any ($a,b$)-th memristor in that layer.
    If the memristor is not on the path $\gamma$, 
    there is no voltage over it and it trivially holds that  
    $$ \varphi_{ab}^\kappa(0) = \varphi_{ab}^\kappa(\tfrac{T}{2}) = \varphi_{ab}^\kappa(T). $$
    Alternatively, consider the memristor to be on the path $\gamma$. 
    As the function $t\mapsto Q(t-\tfrac{T}{2})$ is odd on the interval $[0,\tfrac{T}{2}]$ and odd on the interval $[\tfrac{T}{2},T]$, it follows from Lemma~\ref{lemma: odd voltage} that the input block signal $P_\gamma^0(t) = Q(t-\tfrac{T}{2})$ is such that 
\begin{equation} 
    \phi^\kappa(0) = \phi^\kappa(\tfrac{T}{2}) = \phi^\kappa(T).  \label{eqn:phi0=phiT}
\end{equation} 
 Hence, it follows that $\varphi^\kappa(T; \varphi_0^\kappa,P^{\kappa-1})=\varphi^\kappa_0$ for all $\kappa\in[L]$.
In particular, this implies that $W(\varphi^\kappa(T; \varphi_0^\kappa,P^{\kappa-1})) = W(\varphi_0^\kappa)= \hat{M}^\kappa$ for all $\kappa\in[L]$.

Now, consider any $\Bar{\phi}_0^\ell$ such that 
\begin{equation} \label{eqn: J=J}
    \Bar{J}_\gamma^\ell(t;\phi_0^\ell,P_\gamma^{\ell-1}) = \Bar{J}_\gamma^\ell(t;\Bar{\phi}_0^\ell,P_\gamma^{\ell-1})
\end{equation} 
for all $t\in[0,T]$.  
By \eqref{eqn:J_gamma^l}, \eqref{eqn:phi0=phiT}, and \eqref{eqn: J=J}, it then follows that 
\begin{equation} \label{eqn:J=J2}
    \begin{array}{ll}
         W(\phi_0^{\ell}) P_\gamma^{\ell-1}(\tfrac{T}{2}) &= \Bar{J}_\gamma^\ell(\tfrac{T}{2};{\phi}_0^\ell,P_\gamma^{\ell-1})  \\
      &= \Bar{J}_\gamma^\ell(\tfrac{T}{2};\Bar{\phi}_0^\ell,P_\gamma^{\ell-1})  \\
    &= W(\Bar{\phi}_0^{\ell}) P_\gamma^{\ell-1}(\tfrac{T}{2}).
    \end{array}
\end{equation}
Let us now show by induction that $P_\gamma^{{\ell-1}}(\tfrac{T}{2})\neq 0$. Hereto, note that, by definition, $P_\gamma^0(\tfrac{T}{2})=1\neq 0$. 
Let us now assume that $P_\gamma^\kappa(\tfrac{T}{2})\neq 0$ for some $\kappa\in\{0,\ldots,\ell-2\}$ and show that then also $P_\gamma^{\kappa+1}(\tfrac{T}{2})\neq 0$. 
By definition of the algorithm and using \eqref{eqn: f^k}, we have that 
\begin{align}
    P_\gamma^{\kappa+1}(\tfrac{T}{2}) 
    &= \sigma(W(\phi^{\kappa+1})  P_\gamma^\kappa(\tfrac{T}{2})).
\end{align}
Here, note that, by Assumption~\ref{assumption 1}, it holds that 
$$\sigma(x) = 0 \quad \iff \quad x=0.$$
Furthermore, since the memductance is always positive and $P_\gamma^{\kappa}(\tfrac{T}{2})$ is assumed to be nonzero, it is implied  that
\begin{equation}
    P_\gamma^{\kappa+1}(\tfrac{T}{2}) \neq 0.
\end{equation}  
By induction, it thus holds that $P_\gamma^{{\ell-1}}(\tfrac{T}{2})\neq 0$. 
Combining this with \eqref{eqn:J=J2} gives
\begin{equation}
\begin{array}{ll}
    W({\phi}^{\ell}_0) = W(\Bar{\phi}^{\ell}_0),
    \end{array}
\end{equation}
finalizing the proof.
\end{proof}

Note that the proof of Theorem~\ref{thm: reading} tells us that the instantaneous memductance values can be computed using only the measurements of the currents in step~\ref{step: measure J's} of Algorithm~\ref{algorithm reading - v2}, namely:
\begin{equation} 
    \begin{array}{ll}
        W(\phi_\gamma^{\ell}(0)) = \frac{\Bar{J}_\gamma^{\ell}(\tfrac{T}{2})}{P_\gamma^{\ell-1}(\tfrac{T}{2})},
    \end{array}
\end{equation}
where 
\begin{equation}
     P_\gamma^{\ell-1}(\tfrac{T}{2}) = \sigma(\Bar{J}_\gamma^{\ell-1}(\tfrac{T}{2})).
\end{equation}
In other words, 
\begin{equation} \label{eqn: compute M_kj^l}
        W(\varphi_{kj}^{\ell}(0)) = \tfrac{\Bar{J}_k^{\ell}(\tfrac{T}{2})}{\sigma(\Bar{J}_j^{\ell-1}(\tfrac{T}{2}))}
\end{equation}
as was already stated in step~\ref{step: compute W} of Algorithm~\ref{algorithm reading - v2}.
With this, we find that the reading problem, i.e., Problem~\ref{reading problem}, can be solved by applying Algorithm~\ref{algorithm: reading full}.

\begin{algorithm} 
\caption{Reading the memristor in all layers }
\label{algorithm: reading full}
\begin{algorithmic}[1]
\NoDo
\For{$\ell=1,2,\ldots,L$}
\For{$j=1,2,\ldots,n_{\ell-1}$} 
\For{$k=1,2,\ldots,n_\ell$}
\State  \parbox[t]{\dimexpr\textwidth-\leftmargin-\labelsep-\labelwidth}{Fix a path $\gamma$ to the $(k,j)$-th memristor in \\ layer $\ell$ and set  $S^\kappa$ as in Definition~\ref{definition: switches according to path} for \\ $\kappa\in[L]$} 
\State Set $t=0$
\EndFor
\EndFor
\EndFor
\end{algorithmic}
\end{algorithm}

Note that the algorithm is such that the memristors are read independently
and that their initial memductance values can be uniquely determined from the measured currents by Theorem~\ref{thm: reading}. 
Furthermore, the reading of the memristors is done in a non-invasive way, meaning that $W(\varphi_0^\ell)=W(\varphi^\ell(T))$ for all $\ell\in[L]$.
As a result, using Theorem~\ref{thm: reading}, we obtain the following: 
\begin{corollary}
    Algorithm~\ref{algorithm: reading full} solves the reading problem, i.e., Problem~\ref{reading problem}, within finite time $T\sum_{\kappa=0}^{L-1} n_\kappa n_{\kappa+1}$.
\end{corollary}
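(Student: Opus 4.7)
The plan is to reduce the corollary to a straightforward bookkeeping argument on top of Theorem~\ref{thm: reading}. Algorithm~\ref{algorithm: reading full} is just a triple loop that invokes Algorithm~\ref{algorithm reading - v2} once for every triple $(\ell, j, k)$ with $\ell \in [L]$, $j \in [n_{\ell-1}]$, $k \in [n_\ell]$. So I would organise the proof in two parts: (i) show that after completing all invocations, the two properties of Problem~\ref{reading problem} hold for every layer, and (ii) count the total runtime.

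For part (i), the crucial point is that Theorem~\ref{thm: reading} provides both ingredients for a single invocation. Namely, for a fixed target memristor $(k,j)$ in layer $\ell$, when we set the switches according to a path $\gamma$ and apply $P_\gamma^0(t) = Q(t-\tfrac{T}{2})$, the theorem guarantees (a) non-invasiveness $\varphi^\kappa(T;\varphi_0^\kappa,P^{\kappa-1}) = \varphi_0^\kappa$ for all $\kappa \in [L]$, and (b) uniqueness $W(\phi_{\gamma,0}^\ell) = W(\bar{\phi}_{\gamma,0}^\ell)$ from the currents on the path, computable via \eqref{eqn: compute M_kj^l}. Because (a) holds after each single invocation, the flux matrices at the start of the next invocation are still $\varphi_0^1,\ldots,\varphi_0^L$. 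I would argue by induction on the loop index that this invariant is preserved throughout the entire execution, so the initial condition for every invocation is the common $\varphi_0^\kappa$. Consequently, uniqueness (b) applied across all $(\ell,j,k)$ determines every entry of every $W(\varphi_0^\ell)$, and the non-invariance property at the final step also holds for the whole algorithm, giving both items of Problem~\ref{reading problem}.

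For part (ii), each call of Algorithm~\ref{algorithm reading - v2} runs on a time window of length $T = 4\tau$. The number of calls is
\begin{equation}
\sum_{\ell=1}^{L} n_\ell \, n_{\ell-1} \;=\; \sum_{\kappa=0}^{L-1} n_\kappa \, n_{\kappa+1},
\end{equation}
by re-indexing $\kappa = \ell-1$. Multiplying by $T$ yields the stated bound.

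The main obstacle, although light, is the inductive invariant that the flux matrices are reset to $\varphi_0^\ell$ at the start of every invocation: it relies on the non-invasiveness of Theorem~\ref{thm: reading} applying to \emph{all} layers $\kappa \in [L]$, including those whose switches are set to zero (so their fluxes do not evolve) and those on the currently active path (where the block signal $Q$ guarantees $\phi^\kappa(0) = \phi^\kappa(T)$). Once this invariant is in place, both uniqueness and non-invasiveness for Problem~\ref{reading problem} follow, and the runtime count closes the proof.
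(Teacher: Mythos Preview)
Your proposal is correct and matches the paper's own reasoning. The paper does not give a formal proof of this corollary; it simply notes, just before stating it, that each invocation is non-invasive and uniquely determines the targeted memductance by Theorem~\ref{thm: reading}, which is precisely the inductive invariant and uniqueness argument you spell out, together with the obvious count of $\sum_{\ell=1}^{L} n_\ell n_{\ell-1}$ invocations of length $T$.
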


\begin{remark}
    Note that one can read all memristors on a path $\gamma$ simultaneously by measuring the currents $\Bar{J}_\gamma^\kappa$ for all $\kappa\in[\ell]$. Furthermore, one can read all $j$-th columns in all the layers at the same time by appropriate setting of the switches. 
    The latter would lead to Algorithm~\ref{algorithm: reading full} being done within finite time $T\max\{n_{\kappa-1} \ | \ \kappa\in[L]\}$.
\end{remark}

\section{WRITING} \label{sec:writing}
To solve the writing problem, we can extend the analysis done in \cite{Heidema2024}. Consider any layer $\ell\in[L]$ and let $\mathcal{D}^\ell$ denote the set of realizable memductance matrices of the layer, given by 
    \begin{align} 
        \mathcal{D}^\ell&=\left\{ \left. X \in \mathds{R}_{>0}^{n_\ell\times n_{\ell-1}}  \right\vert \exists \varphi^\ell \in \mathds{R}^{n_\ell\times n_{\ell-1}} \text{ s.t. } X = W(\varphi^\ell)  \right\}. \label{eqn: desired memductance matrices set}
    \end{align} 
     Note that this is the set of all memductance matrices that one could create for the layer, solely based on the limitations of each memristor separately without considering any restrictions imposed by the circuit dynamics~\eqref{eqn: hardware ANN}.  
     
Now, take any desired $\epsilon>0$ and, for each layer $\ell\in[L]$, consider the desired memductance matrix $W_d^\ell\in\mathcal{D}^\ell$. 
Furthermore, assume that the instantaneous memductance matrices $W(\varphi^\ell(0))$ are known for each layer. Note that Algorithm~\ref{algorithm reading - v2} can be used to obtain these matrices.

In our writing algorithm, the layers will be written one-by-one, starting from the last layer and going backwards to the first layer, and that, as soon as the layer $\ell$ is entirely written, it is `cut off' from the layers before it such that the writing process of layer $\ell-1$ does not interfere with the already written memristors in layers $\ell,\ldots,L$. In particular, this is done by setting $S^{\ell}=0$. Note that the switch settings associated to a path $\gamma$ to the $(k,j)$-th memristor in layer $\ell-1$, see Definition~\ref{definition: switches according to path}, ensure that $S^{\ell}=0$.  
Therefore, we will make use of this property of paths when solving the writing problem.

Let us now consider the problem of steering $W(\varphi_{kj}^\ell)$ to $W_{d,kj}^\ell$ for $\ell\in[L]$, $k\in[n_\ell]$, and $j\in[n_{\ell-1}]$.
Since $W_d^\ell\in\mathcal{D}^\ell$, we know that there exists some $\hat{\varphi}_d^\ell$ such that $W(\hat{\varphi}_d^\ell) = W_d^\ell$. In particular,  $W(\hat{\varphi}_{d,kj}^\ell) = W_{d,kj}^\ell$.
Due to strict monotonicity, see Assumption~\ref{assumption W strict monotone}, the problem of steering the memductance value is equivalent to the problem of steering $\varphi_{kj}^\ell$ to $\hat{\varphi}_{d,kj}^{\ell}$.
Now, similar to what was done in the reading algorithm, let us fix a path $\gamma$ to the $(k,j)$-th memristor in layer $\ell$, see Definition~\ref{definition: path}, set the switches accordingly, as specified in Definition~\ref{definition: switches according to path}, and consider the corresponding dynamics~\eqref{eqn: dynamics writing - simplified & compact}. 
To simplify notation, let $\hat{\phi}^\ell$ and $\hat{W}^\ell$ respectively denote $\hat{\varphi}_{d,kj}^{\ell}$ and $W(\hat{\varphi}_{d,kj}^{\ell})$.

Now, an algorithm is presented that solves the problem of steering $\phi_\gamma^\ell$ to $\hat{\phi}^{\ell}$. 
Here, the control input is based on the error between the desired and the current memductance value. 
The latter is computed using the measured currents in the circuit.

\begin{algorithm} 
\caption{Writing the $(k,j)$-th memristor in layer $\ell$ }
\label{algorithm writing 1 memristor}
Fix $T>0$, $\alpha>0$, and a path $\gamma$ to the $(k,j)$-th memristor in layer $\ell$. 
\begin{algorithmic}[1]
\NoDo
\State Set $S^\kappa$ as in Definition~\ref{definition: switches according to path} for $\kappa\in[L]$
\State Apply $P_\gamma^0(t)=x$ for any $x\neq 0$ to \eqref{eqn: dynamics writing - simplified & compact} for $t\in[0,T]$ \label{step: input}
 \State Measure $\Bar{J}_\gamma^\ell(T)$ and $\Bar{J}_\gamma^{\ell-1}(T)$  
 \State Set $i=1$
\While{$\left| \hat{W}^\ell - {\Bar{J}_\gamma^\ell(iT)}/{\sigma(\Bar{J}_\gamma^{\ell-1}(iT))} \right|> \epsilon$}
    \State \label{step: control input} \parbox[t]{\dimexpr\textwidth-\leftmargin-\labelsep-\labelwidth}{Apply $P_\gamma^0(t)=\alpha \left( \hat{W}^\ell -{\Bar{J}_\gamma^\ell(iT)}/{\sigma(\Bar{J}_\gamma^{\ell-1}(iT))} \right) $ to \\ \eqref{eqn: dynamics writing - simplified & compact} for $t\in(iT,(i+1)T]$}
    \State Measure $\Bar{J}_\gamma^\ell((i+1)T)$ and $\Bar{J}_\gamma^{\ell-1}((i+1)T)$  
    \State Set $i=i+1$ 
       \EndWhile 
\end{algorithmic}
\end{algorithm}

Recall that the current memductance value cannot be measured directly. Instead, the measured currents $\Bar{J}_\gamma^\ell$ and $\Bar{J}_\gamma^{\ell-1}$ are used to compute the memductance value. 
Here, note that $P_\gamma^{\ell-1}(iT)=\sigma(\Bar{J}_\gamma^{\ell-1}(iT))$ and hence  
$$\tfrac{\Bar{J}_\gamma^\ell(iT)}{\sigma(\Bar{J}_\gamma^{\ell-1}(iT))} = W(\phi_\gamma^\ell(iT)).$$  
Furthermore, note that initialization of the control input in step~\ref{step: input} of the algorithm with a non-zero value is necessary for the controller to be well-defined. Otherwise,  $\Bar{J}^\ell(0)$ would be zero and hence, by Assumption~\ref{assumption 1}, $\sigma(\Bar{J}^\ell(0))=0$. 
Hence, the control input given in step~\ref{step: control input} of Algorithm \ref{algorithm writing 1 memristor} is based on the error between the desired and the current memductance value.

The following theorem now formalizes the claim that the problem of steering $\phi_\gamma^\ell$ to $\hat{\phi}^\ell$ is solved, the proof of which can be found in Appendix~\ref{Proof of algorithm}. 
\begin{theorem}\label{theorem algorithm}
    Let $\epsilon>0$, $\ell\in[L]$, $k\in\ [n_\ell]$, $j\in [n_{\ell-1}]$, and $W_d^\ell\in\mathcal{D}^\ell$. Let $\alpha>0$ and $T>0$ be such that 
    \begin{equation} \label{eqn: choice of alpha and T}
      T\alpha \leq \frac{1}{\beta (\eta W_\mathrm{max})^{\ell-1}},
    \end{equation}
    with $\eta>0$, $W_\mathrm{max}>0$, and $\beta>0$ as in Assumptions~\ref{assumption 1} and \ref{assumption W strict monotone}.
    Then, for any $\varphi_0^\kappa\in\mathds{R}^{n_\kappa \times n_{\kappa-1}}$, $\kappa\in [\ell]$, there exists $\hat{T}_{kj}^\ell\geq 0$ such that the controller described in Algorithm~\ref{algorithm writing 1 memristor} achieves
    \begin{equation} \label{eqn: epsilon bound}
        |\hat{W}^\ell-W(\phi_\gamma^\ell(\hat{T}_{kj}^\ell))|\leq\epsilon, 
    \end{equation} 
     where $\phi_\gamma^\ell(\cdot)$ is the solution to \eqref{eqn: dynamics writing - simplified & compact}.    
\end{theorem}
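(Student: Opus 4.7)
The plan is to track the discrete sequence $e_i := \hat{W}^\ell - W(\phi_\gamma^\ell(iT))$ sampled at the algorithm's decision instants. Using the identity $\bar{J}_\gamma^\ell(iT)/\sigma(\bar{J}_\gamma^{\ell-1}(iT)) = W(\phi_\gamma^\ell(iT))$ (valid whenever the previous input was nonzero), the control applied on $(iT,(i+1)T]$ is exactly $P_\gamma^0(t) = \alpha e_i$. I will show that $|e_i|$ is nonnegative, nonincreasing, and tends to $0$, which immediately yields a finite smallest $i^\ast$ with $|e_{i^\ast}|\leq\epsilon$ and hence $\hat{T}_{kj}^\ell = i^\ast T$.

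First I would establish a per-iteration step-size bound by propagating magnitudes along the path. On $(iT,(i+1)T]$ the input $\alpha e_i$ is constant; since $\sigma$ is odd (so $\sigma(0)=0$) and $\eta$-Lipschitz and $W \leq W_\mathrm{max}$, an induction on $\kappa$ through the recursion~\eqref{eqn: f^k} yields $|P_\gamma^\kappa(t)| \leq (\eta W_\mathrm{max})^\kappa |\alpha e_i|$ for every $\kappa \in \{0,\ldots,\ell-1\}$. Integrating $\dot{\phi}_\gamma^\ell = P_\gamma^{\ell-1}$ over one period and invoking~\eqref{eqn: choice of alpha and T} gives $|\phi_\gamma^\ell((i+1)T) - \phi_\gamma^\ell(iT)| \leq T\alpha(\eta W_\mathrm{max})^{\ell-1}|e_i| \leq |e_i|/\beta$, so the $\beta$-Lipschitz property of $W$ gives $|\Delta W_i| \leq |e_i|$ for $\Delta W_i := W(\phi_\gamma^\ell((i+1)T)) - W(\phi_\gamma^\ell(iT))$. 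A matching sign argument uses that $W > 0$ together with $\sigma$ being odd and strictly increasing (so $\mathrm{sign}\,\sigma(x) = \mathrm{sign}\,x$): induction through~\eqref{eqn: f^k} yields $\mathrm{sign}\,P_\gamma^\kappa(t) = \mathrm{sign}\,e_i$ throughout the iteration, and strict monotonicity of $W$ transfers this sign to $\Delta W_i$. Combining the sign and magnitude bounds, $e_{i+1} = e_i - \Delta W_i$ simplifies to $|e_{i+1}| = |e_i| - |\Delta W_i|$, so $(|e_i|)_{i\geq 1}$ is nonincreasing and converges to some $e_\infty \geq 0$.

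The main obstacle is ruling out $e_\infty > 0$: since $W$ is only strictly monotone (not strongly), $|\Delta W_i|\to 0$ does not by itself force $e_i\to 0$. I would argue by contradiction. The sign argument above shows $e_i$ keeps constant sign, so WLOG $e_i \geq e_\infty > 0$ for every $i$. Iterating $W \geq W_\mathrm{min}$ together with $\sigma$ strictly increasing and $\sigma(0) = 0$ produces positive constants $s_1,\ldots,s_{\ell-1}$ depending only on $\alpha e_\infty$ (defined recursively by $s_1 := \sigma(W_\mathrm{min}\alpha e_\infty)$ and $s_{\kappa+1} := \sigma(W_\mathrm{min} s_\kappa)$) such that $P_\gamma^\kappa(t) \geq s_\kappa$ on every iteration. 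Integration then yields $\phi_\gamma^\ell((i+1)T) - \phi_\gamma^\ell(iT) \geq T s_{\ell-1} > 0$ uniformly in $i$, so $\phi_\gamma^\ell(iT) \to +\infty$ monotonically. Since $W_d^\ell \in \mathcal{D}^\ell$ there exists a finite $\hat{\phi}$ with $W(\hat{\phi}) = \hat{W}^\ell$; strict monotonicity of $W$ then forces $\lim_{\phi\to+\infty} W(\phi) > \hat{W}^\ell$, so $e_i \to \hat{W}^\ell - \lim_{\phi\to+\infty} W(\phi) < 0$, contradicting $e_\infty > 0$. Hence $e_\infty = 0$, and $\hat{T}_{kj}^\ell = i^\ast T$ for the smallest finite $i^\ast$ with $|e_{i^\ast}| \leq \epsilon$ satisfies~\eqref{eqn: epsilon bound}.
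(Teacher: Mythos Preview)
Your proof is correct and takes a genuinely different route from the paper. The paper works in the \emph{flux} domain: it sets up the Lyapunov function $\xi(\phi_\gamma)=(\phi_\gamma^\ell-\hat{\phi}^\ell)^2$, uses the same propagation estimates through~\eqref{eqn: f^k} that you use to bound the decrement above and below, packages the lower bound into a class-$\mathcal{K}$ function $\gamma(\cdot)$ built from an auxiliary $\nu(r)=\inf\{\alpha|W(\hat{\phi}^\ell)-W(\tilde{\phi})|:|\hat{\phi}^\ell-\tilde{\phi}|\geq r\}$, and then invokes a discrete-time partial-stability theorem to conclude $\phi_\gamma^\ell(iT)\to\hat{\phi}^\ell$; continuity of $W$ finally transfers this to~\eqref{eqn: epsilon bound}. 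You instead track the \emph{memductance} error $e_i=\hat{W}^\ell-W(\phi_\gamma^\ell(iT))$ directly---the quantity the algorithm actually computes and the theorem actually bounds---obtain $|e_{i+1}|=|e_i|-|\Delta W_i|$ from the identical sign and magnitude estimates, and replace the class-$\mathcal{K}$ machinery by a short contradiction: if $e_\infty>0$ the uniform lower bounds $s_\kappa$ force $\phi_\gamma^\ell(iT)\to+\infty$, whereupon strict monotonicity of $W$ together with $\hat{W}^\ell\in\mathrm{range}(W)$ (from $W_d^\ell\in\mathcal{D}^\ell$) makes $e_i$ eventually negative. Your argument is more elementary---no external stability theorem, no auxiliary class-$\mathcal{K}$ construction---and never leaves the quantity of interest; the paper's Lyapunov approach buys the stronger intermediate conclusion that the flux itself converges to $\hat{\phi}^\ell$ and situates the result within a standard control-theoretic framework.
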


With this, we find that the writing problem for the hardware implementation of the ANN, i.e., Problem~\ref{writing problem}, can be solved by applying the following algorithm. 
\begin{algorithm} 
\caption{Writing the memristor in all layers }
\label{writing full}
\begin{algorithmic}[1]
\NoDo
\For{$\ell=L, L-1,\ldots, 1$}
\For{$j=1,2,\ldots, n_{\ell-1}$} 
\For{$k=1,2,\ldots, n_\ell$}
\State  \parbox[t]{\dimexpr\textwidth-\leftmargin-\labelsep-\labelwidth}{Fix a path $\gamma$ to the $(k,j)$-th memristor in \\ layer $\ell$ }
\State Choose $\alpha>0$ and $T>0$ such that \eqref{eqn: choice of alpha and T} holds
\State  \parbox[t]{\dimexpr\textwidth-\leftmargin-\labelsep-\labelwidth}{Apply Algorithm~\ref{algorithm writing 1 memristor} and let $\hat{T}_{kj}^\ell\geq 0$ denote \\ the time such that \eqref{eqn: epsilon bound} holds,
    with $\phi_\gamma^\ell(\cdot)$ \\ the solution to \eqref{eqn: dynamics writing - simplified & compact}} \label{step: apply alg}
\State Set $t=0$
\EndFor
\EndFor
\EndFor
\end{algorithmic}
\end{algorithm}

Note that the time $\hat{T}_{kj}^\ell\geq 0$ as seen in step~\ref{step: apply alg} of Algorithm~\ref{writing full} is finite and exists by Theorem~\ref{theorem algorithm}. Furthermore, by choosing the switches appropriately, the algorithm steers each memristor independently to attain the desired memductance value by Theorem~\ref{theorem algorithm}. 
As a result, we obtain the following: 

\begin{corollary}
    Algorithm~\ref{writing full} solves the writing problem, i.e., Problem~\ref{writing problem}, within finite time $$\hat{T}^1_{11}+\hat{T}_{21}^1+\ldots+\hat{T}_{n_ 1,n_0}^1+\hat{T}_{11}^2+\ldots+\hat{T}^L_{n_L,n_{L-1}}.$$
\end{corollary}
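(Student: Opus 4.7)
The plan is to combine Theorem~\ref{theorem algorithm} with a backwards induction on the layer index, mirroring the outer loop of Algorithm~\ref{writing full} that processes $\ell = L, L-1, \ldots, 1$. The key invariant to maintain is that, after the algorithm finishes processing a layer $\ell$, every memristor in layers $\ell, \ell+1, \ldots, L$ is within $\epsilon$ of its desired memductance; once this invariant reaches $\ell = 1$, Problem~\ref{writing problem} is solved.

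First I would establish the non-interference property of the switch configuration from Definition~\ref{definition: switches according to path}. When Algorithm~\ref{writing full} writes the $(k,j)$-th memristor in layer $\ell$, the path-based switch settings give $S^\ell_{ab} = 0$ for $(a,b)\neq(k,j)$, so by \eqref{eqn: hardware ANN} all other memristors in layer $\ell$ see zero applied voltage and retain their flux. Moreover, if $\ell < L$ then $S^{\ell+1}=0$, which forces $\tfrac{d}{dt}\varphi^{\ell+1} = 0$ and $\bar{J}^{\ell+1} = 0$; iterating this reasoning through successive layers shows that every memristor in layers $\ell+1, \ldots, L$ retains its flux as well. Hence any previously written memristor, whether an earlier entry of layer $\ell$ treated in the current outer-loop iteration or a memristor of layers $\ell+1, \ldots, L$ treated in earlier outer-loop iterations, keeps its memductance throughout the current writing step.

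Next, for each individual memristor I would invoke Theorem~\ref{theorem algorithm}. Since $\alpha$ and $T$ are chosen within Algorithm~\ref{writing full} to satisfy \eqref{eqn: choice of alpha and T}, the theorem guarantees a finite $\hat{T}^\ell_{kj} \geq 0$ such that \eqref{eqn: epsilon bound} holds for the memristor currently being written, regardless of the (arbitrary) initial fluxes of the memristors encountered along the chosen path in layers $\kappa < \ell$. Crucially, the backward traversal ensures that these shallower-layer memristors have not yet been targeted, so any disturbance of their fluxes during the current step is harmless. Summing the finite writing times over all $\ell \in [L]$ and all $(k,j) \in [n_\ell] \times [n_{\ell-1}]$ then yields the stated total running time. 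The main obstacle is the risk of disturbing already-written memristors, and it is resolved by the two zero-padding features of Definition~\ref{definition: switches according to path}, namely isolating a single memristor within layer $\ell$ and cutting off all deeper layers via $S^{\ell+1}=0$, combined with the top-down ordering of the outer loop.
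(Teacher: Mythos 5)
Your proposal is correct and follows essentially the same reasoning as the paper, which justifies the corollary by noting that each $\hat{T}_{kj}^\ell$ is finite by Theorem~\ref{theorem algorithm} and that the switch settings (isolating a single memristor per layer and cutting off deeper layers via $S^{\ell+1}=0$, combined with the backward layer ordering) let each memristor be written independently. Your write-up merely makes explicit the non-interference argument that the paper states informally before Algorithm~\ref{algorithm writing 1 memristor} and in the remark preceding the corollary.
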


\begin{remark}
    Note that, to write the entire implementation of the ANN, Algorithm~\ref{writing full} calls $\sum_{\ell=0}^{L-1} n_\ell n_{\ell+1}$ times on Algorithm~\ref{algorithm writing 1 memristor}. This can be reduced to $\sum_{\ell=0}^{L-1}\max\{n_\ell,n_{\ell+1}\}$ times by appropriate setting of the switches.  
    Namely, one can apply Algorithm~\ref{algorithm writing 1 memristor} to multiple memristors in a layer at the same time, when the memristors are not in the same column and row. This way, the memristors on the `diagonals' in a layer can all be written at once.
\end{remark}

\section{APPLICATIONS} \label{sec: applications}
\subsection{Application 1 - academic example} \label{sec:application 1}
In this section, let us consider the ANN shown in Figure~\ref{fig: example 1}. This ANN has two input neurons, one hidden layer with three neurons, and two output neurons, i.e., $L=2, n_0=2,n_1=3,$ and $n_2=2$. Let 
$$\sigma(x) = \tanh(x)$$ 
be the activation function and let the weight matrices be given by
\begin{equation} \label{eqn: application - Mhat}
    \hat{M}^1 =  \frac{1}{2} \begin{bmatrix}
    1 & 7 \\ 5 & 5 \\ 7 & 1
\end{bmatrix} \quad \text{and} \quad \hat{M}^2 =  \frac{1}{2} \begin{bmatrix}
    1 & 3 & 7\\ 7 & 2& 1
\end{bmatrix}.
\end{equation} 
Note that $\tanh(\cdot)$ is $\eta$-Lipschitz continuous for $\eta=1$.

\begin{figure}
\centering
\begin{tikzpicture}[scale=0.6]
\centering
  \matrix (m) [matrix of math nodes, row sep=1em, column sep=5em]
    {    &  |[node style sp]|  &   \\
      |[node style sp]|  &   &  |[node style sp]| \\
        &  |[node style sp]| &   \\
       |[node style sp]|  &     &|[node style sp]|\\
         &  |[node style sp]| &   \\ };

    { [start chain,node distance=5mm,                     every join/.style={->,matlab1}] \chainin (m-2-1); 
     { [start branch=i1_1] \chainin (m-1-2) [join={node[above,labeled] {\frac{1}{2}}}];}  
     { [start branch=i1_3] \chainin (m-5-2) [join={node[above,labeled] {\frac{7}{2}}}];}
    }

    { [start chain,node distance=5mm,                     every join/.style={->,matlab2}] \chainin (m-2-1);  
    { [start branch=i1_2] \chainin (m-3-2) [join={node[above,labeled] {\frac{5}{2}}}];}   
    }

    { [start chain,node distance=5mm,                     every join/.style={->,matlab3}] \chainin (m-2-1); 
     { [start branch=i1_3] \chainin (m-5-2) [join={node[above,labeled] {\frac{7}{2}}}];}
    }

     { [start chain,node distance=5mm,                     every join/.style={->,matlab4}] \chainin (m-4-1); 
    { [start branch=i1_1] \chainin (m-1-2) [join={node[below,labeled] {\frac{7}{2}}}];} 
     }

     { [start chain,node distance=5mm,                     every join/.style={->,matlab5}] \chainin (m-4-1); 
     { [start branch=i1_3] \chainin (m-3-2) [join={node[below,labeled] {\frac{5}{2}}}];}  
    }

    { [start chain,node distance=5mm,                     every join/.style={->,matlab6}] \chainin (m-4-1);   
     { [start branch=i1_3] \chainin (m-5-2) [join={node[below,labeled] {\frac{1}{2}}}];}
    }

    { [start chain,node distance=5mm,                     every join/.style={->,dashed,matlab1}] \chainin (m-1-2); 
     { [start branch=i1_1] \chainin (m-2-3) [join={node[above,labeled] {\frac{1}{2}}}];}   
    }

    { [start chain,node distance=5mm,                     every join/.style={->,dashed,matlab2}] \chainin (m-1-2);   
     { [start branch=i1_3] \chainin (m-4-3) [join={node[above,labeled] {\frac{7}{2}}}];}
    }

     { [start chain, node distance=5mm,  every join/.style={->,dashed,matlab3}] \chainin (m-3-2); 
    { [start branch=i1_1] \chainin (m-2-3) [join={node[below,labeled] {\frac{3}{2}}}];}   
    }

    { [start chain, node distance=5mm,  every join/.style={->,dashed,matlab4}] \chainin (m-3-2);  
     { [start branch=i1_3] \chainin (m-4-3) [join={node[below,labeled] {1}}];}
    }

    { [start chain,node distance=5mm,  every join/.style={->,dashed,matlab5}] \chainin  (m-5-2); 
    { [start branch=i1_1] \chainin (m-2-3)  [join={node[above,labeled] {\frac{7}{2}}}];}   
    }

    { [start chain,node distance=5mm,  every join/.style={->,dashed,matlab6}] \chainin  (m-5-2);    
     { [start branch=i1_3] \chainin (m-4-3) [join={node[above,labeled] {\frac{1}{2}}}];}
    }

    \node[text width=3cm] at (-4,-4.5) {\textit{input layer `0'}};
    \node[text width=3cm] at (0.5,-4.5) {\textit{hidden layer `1'}};
    \node[text width=3cm] at (5.5,-4.5) {\textit{output layer `2'}};
\end{tikzpicture}
\caption{ANN - to be implemented in hardware.}
\label{fig: example 1}
\end{figure}

Now, consider the circuit implementation of this ANN of the form as shown in Figure~\ref{fig:ANN}.
Here, assume that the flux-controlled memristors are described by
$$ q = g(\varphi) := 2 \varphi - \tfrac{1}{2} \log(\varphi^2+1) + \varphi \arctan(\varphi). $$
It can then be verified that the memductance curve of these memristors is given by
\begin{equation} \label{eqn: application - W funct}
    W(\varphi) = \frac{dg(\varphi)}{d\varphi} = 2+\arctan(\varphi).
\end{equation} 
The memductance values of the memristors hence lie in the interval 
$(W_\mathrm{min}, W_\mathrm{max}) = (-\tfrac{\pi}{2}+2, \tfrac{\pi}{2}+2)$.  
Note that all the values of the elements of matrices $\hat{M}^1$ and $\hat{M}^2$ are contained in this interval, meaning that  $\hat{M}^\ell\in\mathcal{D}^\ell$ for $\ell\in[2]$.
Furthermore, note that the function $W(\cdot)$ as in \eqref{eqn: application - W funct} is $\beta$-Lipschitz continuous for $\beta=1$.
Now, let the initial flux values of the memristors be given by 
$\varphi^1(0) = 0$ and $\varphi^2(0) = 0.$
Hence, the instantaneous memductance matrices are given by
\begin{equation} \label{eqn: application - W0}
    W(\varphi^1(0)) = W(\varphi^2(0))^\top = \begin{bmatrix}
    2 & 2 \\ 2 & 2 \\ 2 & 2
\end{bmatrix}.
\end{equation} 

Let us now use Algorithm~\ref{writing full} to steer the memductance matrices to the desired matrices, i.e., to steer $W(\varphi^\ell(0))$ as in \eqref{eqn: application - W0} to $\hat{M}^\ell$ as in \eqref{eqn: application - Mhat} for $\ell\in[2]$.  
Here, we choose $\epsilon = 0.05$ and $T=1$. Note that 
\begin{align}
    \min_{\ell \in [L]} \left\{ \tfrac{1}{T\beta (\eta W_{\mathrm{max}})^{\ell-1}} \right\} &= \tfrac{1}{T\beta \max\left\{1, (\eta W_{\mathrm{max}})^{L-1}\right\}}.
\end{align}
Therefore, if we choose 
$$\alpha = \tfrac{1}{\max\left\{1, ( \tfrac{\pi}{2}+2))\right\}} = \tfrac{2}{4+\pi} $$
then \eqref{eqn: choice of alpha and T} is satisfied for all $\ell\in[2]$.

Now, consider the path
$\gamma = \begin{bmatrix}
    1 & 3 & 2
\end{bmatrix}$
for steering the $(2,3)$-th memristor in layer $2$, for which the memductance evolves under Algorithm~\ref{writing full} as depicted in Figure~\ref{fig:WritingMem1and2}. 
Here, it can be verified that the memductance value is steered to (approximately) $0.55\in [\tfrac{1}{2}-\epsilon, \tfrac{1}{2}+\epsilon]$. The peak in the curve at the first iteration is a result of the initial voltage that is applied in step~\ref{step: input} of Algorithm~\ref{algorithm writing 1 memristor}. Namely, the initially supplied voltage, here chosen positive, steered the memductance value further away from the desired value. The negative control input based on the error as seen in step~\ref{step: control input} of the algorithm which was supplied thereafter then steered the memductance to the desired value. The sign change of the input voltage results in the peak in the memductance curve.

\begin{figure}
    \centering
    \includegraphics[width=0.95\linewidth]{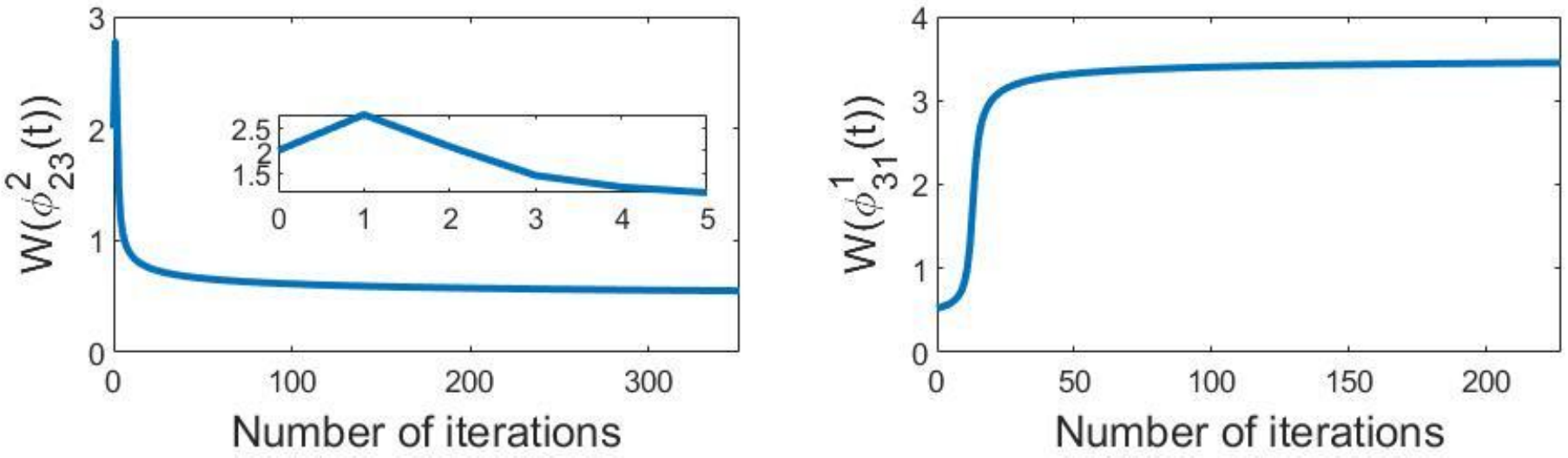}
    \caption{Writing - memductance curves of the $(2,3)$-th memristor in the 2nd crossbar array (left) and the $(3,1)$-th memristor in the 1st crossbar array (right).}
    \label{fig:WritingMem1and2}
\end{figure}

Similarly, all other memristors in layer $2$ can be written, after which this layer is cut off and we write the memristors in layer $1$. Hereto, consider the $(3,1)$-th memristor in layer $1$ and the corresponding path
$\hat{\gamma} = \begin{bmatrix}
    1 & 3
\end{bmatrix}.$
Let $\hat{T}$ denote the time that it took to steer all preceding memristors.
Simulating the writing algorithm in Matlab gives the memductance curve as seen in Figure~\ref{fig:WritingMem1and2}. 
Here, note that the starting point of the curve is not $2$, as one may have expected. Namely, since this memristor was on the path $\gamma$ for steering the $(2,3)$-th memristor in layer $2$, its memductance value has changed during that time. Hence, $W(\varphi_{31}^1(\hat{T}))\neq W(\varphi_{31}^1(0)) = 2$. 

Consider all the memristors to be written appropriately and let $T_w$ denote the time it took to steer all the memristors in both arrays of the circuit implementation of the ANN. We now want to do inference with the written circuit. Hereto, let us use the results in Section~\ref{sec:inference} to evaluate the ANN at the input 
$\hat{u}=\begin{bmatrix}
    -1 & 1
\end{bmatrix}^\top$. 
The corresponding voltage input signals $P_1^0$ and $P_2^0$ supplied to the circuit implementation are block signals with amplitude $-1$ and $1$, respectively, as specified in Theorem~\ref{theorem: inference}, where we take $\tau=5$.
Figures~\ref{fig:InferenceMem1} and~\ref{fig:InferenceMem2} show how the memductance values of the memristors in the two arrays change during the time the input signals are applied. Here, note that these plots show the memductance values to which the memristors were steered, as can be read out from their initial conditions. In particular, one can verify that the memductance matrices after writing are given by
\begin{align}
    W(\varphi^1(T_w))\! \approx\! \begin{bmatrix}
    0.55 \!&\! 3.45 \\ 2.55 \!&\! 2.55 \\ 3.45 \!&\! 0.55
\end{bmatrix}\!\!,\ 
W(\varphi^2(T_w)) \!\approx \!  \begin{bmatrix}
    0.55 \!&\! 3.45 \\
    1.53 \!&\! 1.04 \\
    3.45 \!&\! 0.55
\end{bmatrix}^{\!\top}\!\!. 
\end{align} 
Here, note that 
$|W(\varphi_{kj}^\ell(T_w))-\hat{M}_{kj}^\ell| \leq \epsilon$
for all $\ell\in[2], k\in[n_\ell],j\in[n_{\ell-1}]$. 
Furthermore, note that Figures~\ref{fig:InferenceMem1} and \ref{fig:InferenceMem2} show that the memductance value of each individual memristor is the same at time $t=T_w,T_w+10$, and $T_w+20$, as proven in Lemma~\ref{lemma: odd voltage}.

Now, the output voltage potentials at time $t=T_w+10$, corresponding to the input voltage signals, can be found to be (approximately) given by 
$$ P^2(T_w+10) \approx \begin{bmatrix}
    -0.99377 \\ 0.99374 
\end{bmatrix}. $$
Note that these values are close to the theoretical value
$$\sigma(\hat{M}^2\sigma(\hat{M}^1 \hat{u})) \approx \begin{bmatrix}
    -0.99380 \\ 0.99373 
\end{bmatrix},$$
where the values are not exactly the same as the memductance values of the memristors are not exactly equal to the desired values $\hat{M}^1$ and $\hat{M}^2$.

\begin{figure}
    \centering
    \includegraphics[width=0.95\linewidth]{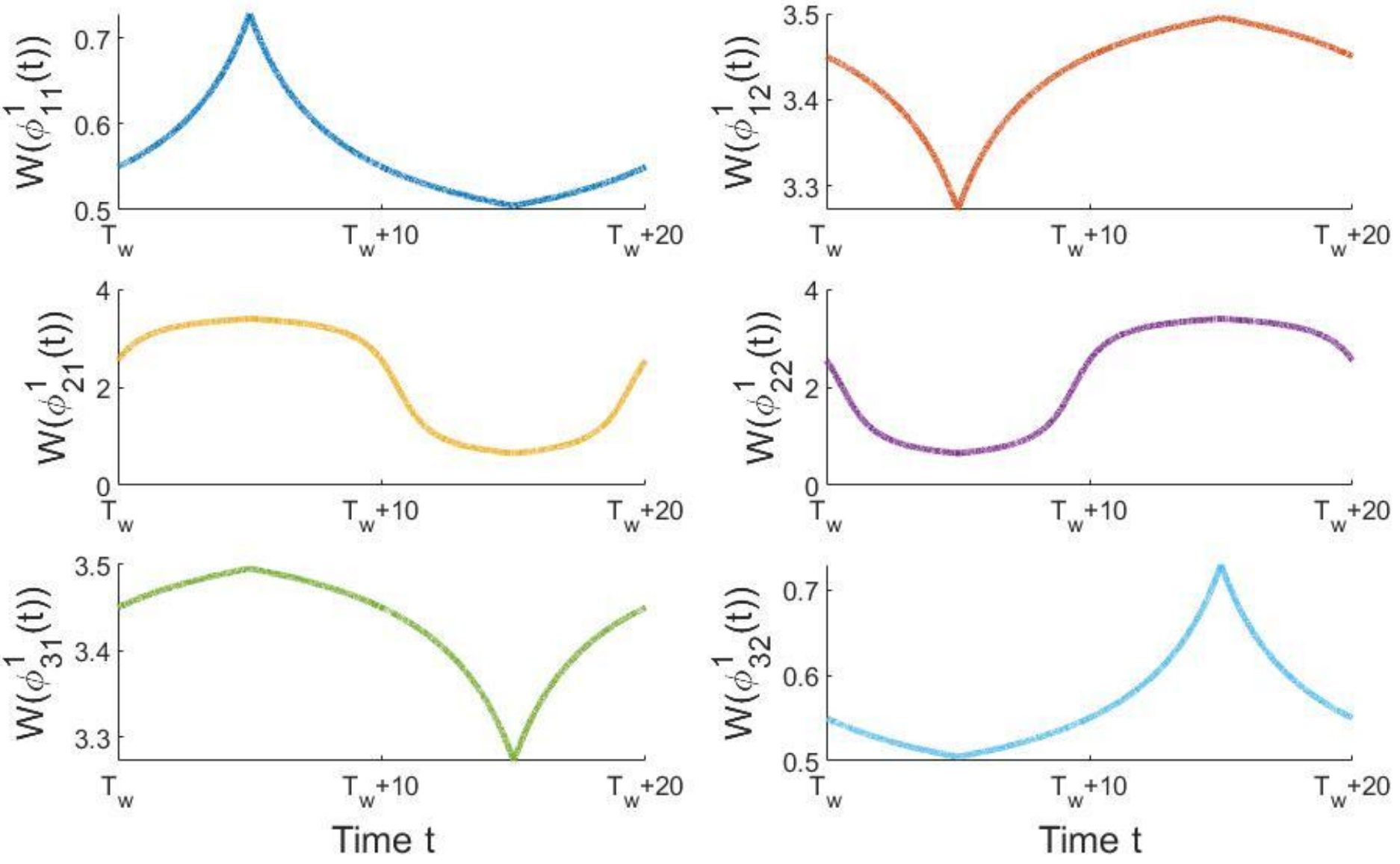}
    \caption{Inference - memductance curves of the memristors in the 1st crossbar array.}
    \label{fig:InferenceMem1}
\end{figure}

\begin{figure}
    \centering
    \includegraphics[width=0.97\linewidth]{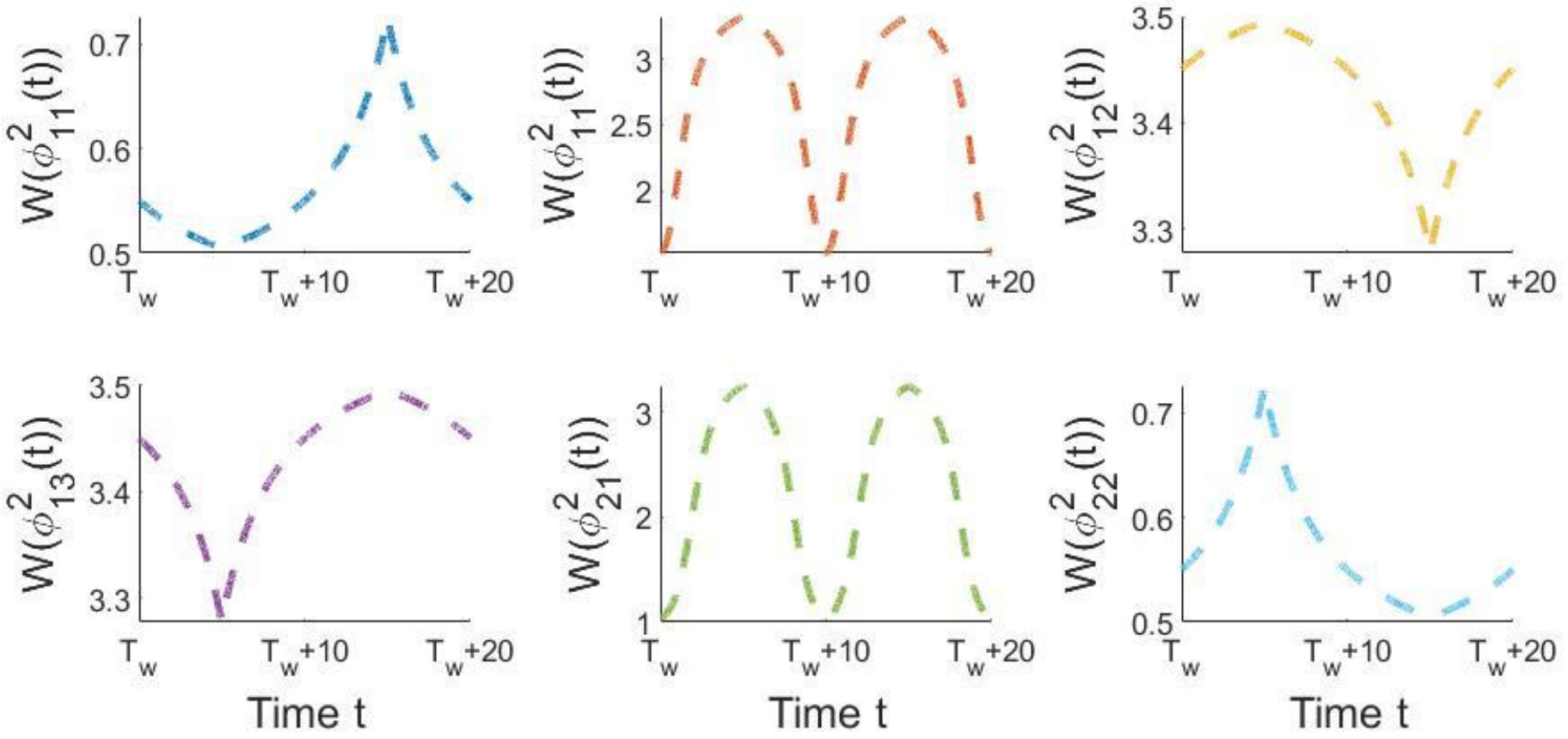}
    \caption{Inference - memductance curves of the memristors in the 2nd crossbar array.}
    \label{fig:InferenceMem2}
\end{figure}
\subsection{Application 2 - practical example}\label{sec: mnist example}
In this section, we apply the theory to an ANN that was trained with the MNIST data set.
The training dataset consists of 60000 images of handwritten digits 0,1,\ldots,9.
Each of these images is 28 by 28 pixels. 
The ANN that was trained to recognize the digits has $n_0=784$ input neurons, one for each pixel of the images, and $n_2=10$ output neurons, corresponding to digits 0-9. For simplicity, one hidden layer with $n_1=10$ neurons was added between the input and output layers. 
The activation function of the neurons was chosen as
$$ \sigma(x) = 3 \  \mathrm{sigmoid}(x)-1.5 = \tfrac{1.5-1.5 e^{-x}}{1+e^{-x}}. $$
Ideally, when the trained ANN is given an input image with the digit $k$, the output should be a vector of zeros with a `1' at the $(k+1)$-th element, $k\in\{0,1,\ldots,9\}$.
In our simulations, the maximum value in the output vector is considered to correspond to the digit recognized by the ANN. 
The trained ANN was tested using the MNIST test dataset containing 10000 new images of handwritten digits, for which an accuracy of approximately 88\% was obtained.

Now, the trained ANN with weight matrices $\hat{M}^1$ and $\hat{M}^2$ is implemented in the electrical circuit model \eqref{eqn: hardware ANN}. Here, note that the obtained weights are not all positive values. Instead, we also have negative weights. In particular, the elements of $\hat{M}^1$ and $\hat{M}^2$ approximately belong to the intervals $(- 1.8640,  1.8539)$, respectively, $(- 0.5586,  0.5140)$. 
  To implement negative weights, we make use of  Remark~\ref{remark: pair of memristors}. In particular, we consider the same flux-controlled memristors as in Section~\ref{sec:application 1}. 
Now, we implement each synaptic weight by a pair of memristors as 
$$\hat{M}_{kj}^\ell = W(\varphi_{kj}^{\ell})S_{kj}^{\ell} - W(\varphi_{k+n_\ell,j}^{\ell})S_{k+n_\ell,j}^{\ell},$$
for $\ell\in[2]$. The circuit implementation hence allows for any synaptic weight value in the interval
$ (-W_\mathrm{max}, W_\mathrm{max}) = (-\tfrac{\pi}{2}-2, \tfrac{\pi}{2}+2)$. 
Note that all the values of the elements of $\hat{M}^\ell$ are contained in this interval, meaning that $\hat{M}^\ell\in\mathcal{D}^\ell$ for $\ell\in[2]$. 
Furthermore, note that the initial flux values of the memristors corresponding to this implementation are given by
$ \varphi^\ell_0 = \tan(W(\varphi^\ell(0)) - 2) $ 
for each $\ell\in[2]$. 
Following Remark~\ref{remark: pair of memristors}, the activation function is implemented such that
$ P_k^{\ell}(t) = \sigma(\Bar{J}_k^\ell - \Bar{J}_{k+n_{\ell}})$, 
for all $\ell\in[2]$ and $k\in[n_\ell]$. 

Now, one can do inference with the circuit. In particular, one can take an image from the MNIST test dataset and feed it as voltage input to the circuit. Here, the inputs $P_j^0$ are block signals with an amplitude equal to the value of the corresponding pixel of the test image, as specified in Theorem~\ref{theorem: inference}, where we take $\tau=5$. 
If we consider the first image in the dataset, corresponding to a handwritten number 7, then the output voltage potentials at the mid-time of the block signal, i.e., at time $t=10$, can be found to be close to the theoretical value:
\begin{align}
    P^2(10) &\approx -\tfrac{1}{100} \begin{bmatrix}
    1 \!&\!  
  1 \!&\!
  1 \!&\! 
  1 \!&\! 
  1 \!&\! 
  1 \!&\! 
   0 \!&\! 
   -100 \!&\!
   -1 \!&\! 
   -2
\end{bmatrix}^\top \\
&\approx \sigma(\hat{M}^2\sigma(\hat{M}^1 \hat{u})).
\end{align}
In both the ANN and its circuit implementation, the recognized digit is hence correctly found to be the number 7.  
One can feed all images in the dataset to the circuit and verify that it recognizes the digits correctly with an accuracy of 88\%, which is the same accuracy as the ANN that it implements.

\section{CONCLUSIONS} \label{sec: conclusion}
We have shown how artificial neural networks can be implemented in hardware using flux-controlled memristors and memristive crossbar arrays with switches.
First, we have defined the \textit{inference} problem. This problem concerns evaluating an ANN at a given input by appropriately choosing the input voltage signals to the hardware implementation and measuring the corresponding output voltage potentials at a certain time. 
Furthermore, we have defined the problems of \textit{reading} and \textit{writing} the memductance values of the memristors in the circuit implementation of the ANN, i.e., the problems of determining and steering the memductance values. The reading problem was solved 
by means of an algorithm which determines the memductance values of the memristors one by one. In the algorithm, input voltage signals and switch settings are chosen in such a way that the memductance value of a memristor can be determined from corresponding current measurements.
The writing problem was solved by means of an algorithm in which input voltage signals and switch settings are updated by current measurements in such a way that the memductance values are steered towards desired values, one memristor at a time.
The results were then applied to two applications. The first application was an academic example which shows the writing of a network and evaluations of the written network thereafter. The second example showed a more practical application of the theory discussed in this paper, namely, to use the suggested circuit for image processing. 
In this example, an ANN, which was trained with the MNIST data set, was implemented in a circuit and evaluated at test data from the MNIST data set.

Future work will focus on sparse ANNs and how to efficiently deal with the sparsity in the circuit implementation of the ANN.

\section{APPENDIX} \label{sec: appendix}
\subsection{Proof of Lemma~\ref{lemma: odd voltage}} \label{Proof of odd voltage signals}
    Consider any $\ell\in[L]$ and $k\in [n_\ell]$. Using \eqref{eqn: hardware ANN}, we have  
        $P_k^\ell = \sigma(\Bar{J}_k^{\ell}),$
    with 
    \begin{equation}
        \bar{J}_k^\ell = \sum_{j=1}^{n_{\ell-1}} W(\varphi_{kj}^{\ell})S_{kj}^{\ell}P_j^{\ell-1} = \sum_{j=1}^{n_{\ell-1}} I_{kj}^{\ell}S_{kj}^{\ell},
    \end{equation}   
    where $I_{kj}^{\ell} = W(\varphi_{kj}^{\ell})P_j^{\ell-1}$. 
Note that, by Assumption~\ref{assumption 1}, we have 
    $\sigma(x)=0$ if and only if $x=0$.
In addition, note that the zero function trivially satisfies Properties~\ref{property: odd on interval 1} and \ref{property: odd on interval 2}. 
Now, consider any $j\in~[n_{\ell-1}]$. Then, by  \eqref{eqn: hardware ANN}, we obtain the dynamics
$\tfrac{d}{dt} \varphi_{kj}^\ell = S_{kj}^\ell P_j^{\ell-1}$, 
where $S_{kj}^\ell$ is assumed to be constant.
Then, $P_j^{\ell-1}$ being odd on the interval  $[0,\tfrac{T}{2}]$ means that 
\begin{equation} \label{eqn: oddness of P^{l-1}}
    P_j^{\ell-1}(t) = -P_j^{\ell-1}\left(\tfrac{T}{2}-t\right), \quad \forall t\in\left[0,\tfrac{T}{4}\right].
\end{equation}
Let $t\in\left[0,\tfrac{T}{4}\right]$ and consider 
\begin{align}
   & \varphi_{kj}^{\ell}\left(\tfrac{T}{2}-t\right) = \varphi_{kj}^{\ell}(0)+S_{kj}^\ell\int_0^{\tfrac{T}{2}-t} P_j^{\ell-1}(r) dr \\
    &\ = \varphi_{kj}^{\ell}(0)+S_{kj}^\ell\int_0^{\tfrac{T}{4}} P_j^{\ell-1}(r) dr +S_{kj}^\ell\int_{\tfrac{T}{4}}^{\tfrac{T}{2}-t} P_j^{\ell-1}(r) dr.
\end{align}
Noting that 
\begin{align}
  \int_{\tfrac{T}{4}}^{\tfrac{T}{2}-t}\!\! P_j^{\ell-1}(r) dr \!=\!  \int_{t}^{\tfrac{T}{4}}\!\! P_j^{\ell-1}\left(\tfrac{T}{2}-\Tilde{r}\right) d\Tilde{r} \!=\! -\!\!\int_{t}^{\tfrac{T}{4}}\!\! P_j^{\ell-1}(\Tilde{r}) d\Tilde{r}
\end{align}
by substitution of $\Tilde{r}=\tfrac{T}{2}-r$ and  \eqref{eqn: oddness of P^{l-1}}, we obtain
$\varphi_{kj}^{\ell}\left(\tfrac{T}{2}-t\right)  = \varphi_{kj}^{\ell}(t)$.
Hence, $\varphi_{kj}^{\ell}$ is even on the interval  $[0,\tfrac{T}{2}]$. 
In turn, this implies that 
\begin{align}
    &I_{kj}^{\ell}\left(\tfrac{T}{2}-t\right) = W\left(\varphi_{kj}^{\ell}\left(\tfrac{T}{2}-t\right)\right) P_j^{\ell-1}\left(\tfrac{T}{2}-t\right) \\
    & \ = -W\left(\varphi_{kj}^{\ell}\left(t\right)\right) P_j^{\ell-1}\left(t\right) = -I_{kj}^{\ell}(t).
\end{align}
Hence, $I_{kj}^{\ell}$ is odd on the interval  $[0,\tfrac{T}{2}]$.
Similarly, one can show that Property~\ref{property: odd on interval 2} implies that $\varphi_{kj}^{\ell}$ is even and $I_{kj}^{\ell}$ is odd on the interval  $[\tfrac{T}{2},T]$.
Now, since the sum of odd functions is again odd, this implies that $\bar{J}_k^\ell$ satisfies Properties~\ref{property: odd on interval 1} and \ref{property: odd on interval 2}.  
Using that the activation function is odd by Assumption~\ref{assumption 1}, it immediately follows that $P_k^\ell$ also satisfies Properties~\ref{property: odd on interval 1} and \ref{property: odd on interval 2}, finalizing the proof.
\subsection{Properties of the functions $f^\kappa$} \label{Proof of properties of f^k}
In this section, we will consider a constant input $P_\gamma^0$ on the time interval $[0,T]$ for some $T>0$. 
With this, we will state and prove some properties of the functions $f^\kappa$ as in \eqref{eqn: f^k} on the interval $[0,T]$.  
\begin{lemma} \label{lemma: properties of f^k}
    Let $f^\kappa$ with $\kappa\in[\ell-1]$ be as in \eqref{eqn: f^k}. 
    Then, the following properties hold: for all $\phi_\gamma$,
    \begin{enumerate}
        \item $\text{sign}(f^\kappa(\phi_\gamma,P_\gamma^0)) = \text{sign}(P_\gamma^0)$. \label{property 1 of fk}
        \item $|f^\kappa(\phi_\gamma,P_\gamma^0)| \leq (\eta W_\mathrm{max})^{\kappa-1} |P_\gamma^0|$. \label{property 2 of fk}
        \item $|f^{\kappa+1}(\phi_\gamma,P_\gamma^0)| \geq \sigma(W_\mathrm{min}|f^\kappa(\phi_\gamma,P_\gamma^0)|)$.   \label{property 3 of fk}
    \end{enumerate}
\end{lemma}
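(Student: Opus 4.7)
My plan is to establish all three properties by induction on $\kappa$, exploiting the recursive definition $f^{\kappa+1} = \sigma(W(\phi_\gamma^\kappa) f^\kappa)$ together with Assumptions~\ref{assumption 1} and~\ref{assumption W strict monotone}. The key analytic facts I will need about $\sigma$ are: (i) $\sigma(0)=0$ and $\sigma$ is strictly increasing (since $\sigma$ is odd and strictly monotone), so $\sigma$ preserves sign; (ii) $|\sigma(x)|\le \eta|x|$ for all $x$, which follows from $\eta$-Lipschitz continuity together with $\sigma(0)=0$; and (iii) $|\sigma(x)|=\sigma(|x|)$, which follows from oddness and (i). For $W$, I will repeatedly use the bounds $W_\mathrm{min}\le W(\cdot)\le W_\mathrm{max}$.

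For Property~\ref{property 1 of fk}, the base case $\kappa=1$ is immediate from $f^1=P_\gamma^0$. For the inductive step, since $W(\phi_\gamma^\kappa)>0$, the argument $W(\phi_\gamma^\kappa) f^\kappa(\phi_\gamma,P_\gamma^0)$ of $\sigma$ has the same sign as $f^\kappa(\phi_\gamma,P_\gamma^0)$, which by the inductive hypothesis equals $\operatorname{sign}(P_\gamma^0)$; applying the sign-preservation of $\sigma$ yields $\operatorname{sign}(f^{\kappa+1})=\operatorname{sign}(P_\gamma^0)$.

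For Property~\ref{property 2 of fk}, the base case is $|f^1|=|P_\gamma^0|=(\eta W_\mathrm{max})^0|P_\gamma^0|$. For the inductive step, I would chain the two bounds: using $|\sigma(x)|\le\eta|x|$ gives
\begin{equation}
|f^{\kappa+1}(\phi_\gamma,P_\gamma^0)| \le \eta\,W(\phi_\gamma^\kappa)\,|f^\kappa(\phi_\gamma,P_\gamma^0)| \le \eta W_\mathrm{max}\,(\eta W_\mathrm{max})^{\kappa-1}|P_\gamma^0|,
\end{equation}
which is the desired inequality with exponent $\kappa$.

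For Property~\ref{property 3 of fk}, the argument is in some sense the mirror image of the previous one, and is the step I expect to require the most care because it combines the lower bound $W\ge W_\mathrm{min}$ with the non-trivial identity $|\sigma(x)|=\sigma(|x|)$. Concretely, I would write
\begin{equation}
|f^{\kappa+1}(\phi_\gamma,P_\gamma^0)| = \bigl|\sigma\bigl(W(\phi_\gamma^\kappa)f^\kappa(\phi_\gamma,P_\gamma^0)\bigr)\bigr| = \sigma\bigl(W(\phi_\gamma^\kappa)|f^\kappa(\phi_\gamma,P_\gamma^0)|\bigr),
\end{equation}
and then use $W(\phi_\gamma^\kappa)|f^\kappa|\ge W_\mathrm{min}|f^\kappa|$ together with the fact that $\sigma$ is strictly increasing on $\mathbb{R}$ to conclude $|f^{\kappa+1}|\ge \sigma(W_\mathrm{min}|f^\kappa|)$. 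The only subtlety is justifying the identity $|\sigma(x)|=\sigma(|x|)$, but this is a direct consequence of $\sigma$ being odd and (strictly) increasing, so the proof should be short.
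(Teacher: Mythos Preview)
Your proposal is correct and follows essentially the same inductive route as the paper: Property~\ref{property 1 of fk} via sign-preservation of $\sigma$ and positivity of $W$, Property~\ref{property 2 of fk} via $|\sigma(x)|\le\eta|x|$ and $W\le W_\mathrm{max}$, and Property~\ref{property 3 of fk} via the identity $|\sigma(x)|=\sigma(|x|)$ together with $W\ge W_\mathrm{min}$ and monotonicity of $\sigma$. One small remark: your parenthetical ``since $\sigma$ is odd and strictly monotone'' does not by itself force $\sigma$ to be \emph{increasing} (e.g., $x\mapsto -x$ is odd and strictly monotone), so the sign-preservation and the identity $|\sigma(x)|=\sigma(|x|)$ rely on the implicit convention---shared by the paper---that the activation function is strictly increasing.
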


\begin{proof}
    To simplify the notation in this proof, let $\phi$ and $\phi^\kappa$ respectively denote $\phi_\gamma$ and $\phi_\gamma^\kappa$ for $\kappa\in[\ell]$.
    Let us first show that Property~\ref{property 1 of fk} holds. For $\kappa=1$, the property trivially holds. Now,  assume that Property~\ref{property 1 of fk} holds for some $\kappa\in[\ell-2]$. 
    Then, the strict monotonicity of $\sigma(\cdot)$, see Assumption~\ref{assumption 1}, and the positivity of the memductance can be used to show that 
    \begin{align}
        &\text{sign}(f^{\kappa+1}(\phi,P_\gamma^0)) = \text{sign}(\sigma(W(\phi^\kappa)f^\kappa(\phi,P_\gamma^0))) \\
        &= \text{sign}(W(\phi^\kappa)f^\kappa(\phi,P_\gamma^0))  = \text{sign}(f^\kappa(\phi,P_\gamma^0))  = \text{sign}(P_\gamma^0).
    \end{align}
    Hence, by induction, Property~\ref{property 1 of fk} holds.

    Let us now show that Property~\ref{property 2 of fk} holds.  For $\kappa=1$, we have that 
    $ |f^1(\phi,P_\gamma^0)| = |P_\gamma^0|$
    and hence the property trivially holds. Now,  assume that the property holds for some $\kappa\in[\ell-2]$. The Lipschitz continuity of $\sigma(\cdot)$, see Assumption~\ref{assumption 1}, and Assumption~\ref{assumption W strict monotone} can then be used to obtain that 
    \begin{align}
        &|f^{\kappa+1}(\phi,P_\gamma^0)| \leq \eta W_\mathrm{max} |f^\kappa(\phi,P_\gamma^0)| \leq (\eta W_\mathrm{max})^\kappa |P_\gamma^0|.
    \end{align}
     Hence, by induction, Property~\ref{property 2 of fk} holds.

     Lastly, let us show that Property~\ref{property 3 of fk} holds. Note that, since $\sigma(\cdot)$ is odd by Assumption~\ref{assumption 1}, we have 
     $ |\sigma(x)| = \sigma(|x|)$
     for any $x$. 
     Using Assumption~\ref{assumption W strict monotone} and that $\sigma(\cdot)$ is strictly monotone, see Assumption~\ref{assumption 1}, it follows that, for any $\kappa\in[\ell-2]$, 
     $|f^{\kappa+1}(\phi,P_\gamma^0)| \geq \sigma(W_\mathrm{min}|f^\kappa(\phi,P_\gamma^0)|)$,
    concluding the proof.
\end{proof}

Using Lemma~\ref{lemma: properties of f^k}, we can state and prove the following.

\begin{lemma}\label{lemma: exists ! solution to ODE}
     For any constant $P_\gamma^0$ and any initial condition $\phi_\gamma(0) = \phi_{\gamma,0}$, there exists a unique solution $\phi_\gamma$  to the ODE~\eqref{eqn: dynamics writing - simplified & compact}.
\end{lemma}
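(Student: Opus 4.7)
The plan is to prove the lemma by exploiting the triangular cascade structure of the ODE~\eqref{eqn: dynamics writing - simplified & compact}. The key observation is that, from the recursive definition \eqref{eqn: f^k}, the function $f^\kappa$ depends only on $\phi_\gamma^1, \ldots, \phi_\gamma^{\kappa-1}$ (and on the constant input $P_\gamma^0$), but \emph{not} on $\phi_\gamma^\kappa, \ldots, \phi_\gamma^\ell$. Hence the system decouples into scalar equations that can be solved layer by layer, and existence and uniqueness follow from repeated elementary integrations.

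First I would handle the base case: since $f^1(\phi_\gamma,P_\gamma^0) = P_\gamma^0$ is constant, the equation $\dot\phi_\gamma^1 = P_\gamma^0$ admits the unique explicit solution $\phi_\gamma^1(t) = \phi_{\gamma,0}^1 + t\, P_\gamma^0$, which is continuous (in fact $C^1$) on $[0,T]$. Next I would set up the inductive step. Assuming that $\phi_\gamma^1, \ldots, \phi_\gamma^\kappa$ have already been uniquely determined as continuous functions of $t$, the continuity of $W$ (Assumption~\ref{assumption 2}) and of $\sigma$ (Assumption~\ref{assumption 1}), together with the recursive form of $f^{\kappa+1}$, ensure that the map $t \mapsto f^{\kappa+1}(\phi_\gamma(t),P_\gamma^0)$ is a continuous function of $t$ alone. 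The equation $\dot\phi_\gamma^{\kappa+1}(t) = f^{\kappa+1}(\phi_\gamma(t),P_\gamma^0)$ therefore reduces to the direct integration
\begin{equation*}
    \phi_\gamma^{\kappa+1}(t) = \phi_{\gamma,0}^{\kappa+1} + \int_0^t f^{\kappa+1}(\phi_\gamma(s),P_\gamma^0)\, ds,
\end{equation*}
which defines $\phi_\gamma^{\kappa+1}$ uniquely and continuously on $[0,T]$. Iterating from $\kappa = 1$ up to $\kappa = \ell-1$ closes the induction and yields the unique solution of the full system.

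I do not expect a real obstacle: triangularity converts existence and uniqueness into a finite sequence of scalar antiderivative computations, avoiding any fixed-point argument. An alternative (but heavier) route would be to verify global Lipschitz continuity of the vector field $f$ in $\phi_\gamma$ and invoke the Picard--Lindel\"of theorem directly; this would use the boundedness of each $f^\kappa$ from Property~\ref{property 2 of fk} of Lemma~\ref{lemma: properties of f^k}, the Lipschitz continuity and boundedness of $W$ from Assumption~\ref{assumption W strict monotone}, and the $\eta$-Lipschitz continuity of $\sigma$ from Assumption~\ref{assumption 1}, combined via the standard product/composition Lipschitz estimates and induction on $\kappa$. The cascade approach, however, is cleaner and more transparent.
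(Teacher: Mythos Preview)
Your proposal is correct, and it takes a genuinely different route from the paper. The paper proves global Lipschitz continuity of the full vector field $f(\cdot,P_\gamma^0)$ in $\phi_\gamma$ by induction on $\kappa$, using the Lipschitz continuity of $\sigma$ and $W$ together with the bound from Property~\ref{property 2 of fk} of Lemma~\ref{lemma: properties of f^k}, and then appeals to Picard--Lindel\"of. This is precisely the ``heavier'' alternative you mention at the end. Your primary argument instead exploits the observation that $f^\kappa$ depends only on $\phi_\gamma^1,\ldots,\phi_\gamma^{\kappa-1}$, so that the system is lower-triangular and each scalar equation reduces to integrating a known continuous function of time. This is more elementary: it needs only continuity of $\sigma$ and $W$, avoids tracking Lipschitz constants, and yields a constructive solution. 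The paper's route, while less direct, produces an explicit Lipschitz constant for $f$; however, that constant is not reused elsewhere in the paper, so your cascade argument is arguably the cleaner choice here.
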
 
\begin{proof}  
Consider any $\phi_{\gamma,0}$ and $P_\gamma^0$. 
To simplify the notation in this proof, let $\phi, \phi^\kappa,$ and $P^0$ respectively denote $\phi_\gamma, \phi_\gamma^\kappa,$ and $P_\gamma^0$ for $\kappa\in[\ell]$.
Now, a solution to the ODE~\eqref{eqn: dynamics writing - simplified & compact} always exists and is unique if the function  $f(\phi, P^0)$ is Lipschitz continuous with respect to $\phi$, i.e., for any choice of the constant input $P^0$, there exists $\mu>0$ such that
 \begin{equation} \label{eqn: Lipschitz f}
     \|f(\phi, P^0) - f(\Tilde{\phi}, P^0)\|_2 \leq \mu \|\phi -\Tilde{\phi}\|_2
 \end{equation} 
    for all $\phi, \Tilde{\phi}$.
    Here, $\|\cdot\|_2$ denotes the Euclidean norm.
    Let us now show that the function  $f(\phi, P^0)$ is Lipschitz continuous with respect to $\phi$. 

If $P^0=0$, then 
    $f^1(x, P^0) = P^0 = 0$
    for all $x$. Since $\sigma(0)=0$ by Assumption~\ref{assumption 1}, one can easily verify that 
    $f^\kappa(x, P^0) = 0$
    for all $\kappa\in[\ell-1]$ and for all $x$. In that case, \eqref{eqn: Lipschitz f} trivially holds for any choice of $\mu>0$. 
            
    Therefore, let us consider $P^0\neq 0$. 
    Note that, by norm equivalence, we know that there exists some $c>0$ such that     
    \begin{align}
        &\|f(\phi, P^0) - f(\Tilde{\phi}, P^0)\|_2  \leq c \|f(\phi, P^0) - f(\Tilde{\phi}, P^0)\|_1 \\
       &= c\sum_{\kappa=1}^{\ell-1} |f^{\kappa+1}(\phi, P^0) - f^{\kappa+1}(\Tilde{\phi}, P^0)|. \label{eqn: norm of f}
    \end{align}

    Now, take any $\kappa\in[\ell-1]$. 
    We claim that there exists $\mu_\kappa>0$ such that 
    \begin{equation}\label{eqn: fk lipschitz}
        |f^{\kappa+1}(\phi, P^0) - f^{\kappa+1}(\Tilde{\phi}, P^0)| \leq \mu_\kappa \|\phi -\Tilde{\phi}\|_2
    \end{equation} 
    for any $\phi$ and $\Tilde{\phi}$. For $\kappa=1$, we can use the Lipschitz continuity of $\sigma(\cdot)$ and $W(\cdot)$, see Assumptions~\ref{assumption 1} and \ref{assumption W strict monotone}, to obtain that
    \begin{align}
       &|f^2(\phi,P^0)-f^2(\Tilde{\phi},P^0)| \leq \eta |W(\phi^1)-W(\Tilde{\phi}^1)| |P^0| \\
        &\leq \eta \beta |P^0| |\phi^1-\Tilde{\phi}^1| =: \mu_1  |\phi^1-\Tilde{\phi}^1|. 
    \end{align}
    Note here that $\mu_1>0$ since $\eta,\beta>0$ by definition and $P^0\neq 0$ by assumption.  
Now, assume that the property holds for some $\kappa\in[\ell-2]$. 
Define
$$\mu_\kappa := 2\max\{\eta W_\mathrm{max} \mu_{\kappa-1}, \eta^\kappa \beta W_\mathrm{max}^{\kappa-1}|P^0|\} $$
and note that 
$ \| x \|_2  = \sqrt{x_1^2 + \ldots + x_m^2} \geq \sqrt{x_a^2} = |x_a| $
for any vector $x\in\mathds{R}^m$ and any $a\in[m]$.
Then, from Property~\ref{property 2 of fk} of Lemma~\ref{lemma: properties of f^k} and the Lipschitz continuity of $\sigma(\cdot)$ and $W(\cdot)$ one can verify that
\begin{align}
    &|f^{\kappa+1}(\phi,P^0)-f^{\kappa+1}(\Tilde{\phi},P^0)| 
       \leq \mu_\kappa \| \phi - \Tilde{\phi} \|_2. 
\end{align}
Using that $P^0\neq 0$ by assumption, it is clear that $\mu_\kappa>0$ by definition. 
 Hence, by induction, we have shown that there exists a positive $\mu_\kappa$ such that \eqref{eqn: fk lipschitz} holds for all $\phi$ and $\Tilde{\phi}$. Using \eqref{eqn: norm of f}, we hence find that 
 $$\|f(\phi, P^0) - f(\Tilde{\phi}, P^0)\|_2 \leq c \sum_{\kappa=1}^{\ell-1} \mu_\kappa \|\phi -\Tilde{\phi}\|_2 $$
 and therefore that \eqref{eqn: Lipschitz f} holds for 
 $ \mu = c \sum_{\kappa=1}^{\ell-1} \mu_\kappa, $
 which finalizes the proof.
\end{proof}
\subsection{Proof of Theorem~\ref{theorem algorithm}} \label{Proof of algorithm}
In this section, we will prove Theorem~\ref{theorem algorithm}, for which we will make use of the third statement of Theorem 13.9 in \cite{Haddad2008}. Hereto, note that a function $h:[0,a)\rightarrow[0,\infty)$ with $a\in(0,\infty]$ is called a class $\mathcal{K}$ function if $h(0)=0$ and it is strictly monotone. 
Now, let us define some notation. 
Let $\Phi(\phi_{\gamma,0},P_\gamma^0)$ denote the solution $\phi_\gamma(T)$ to \eqref{eqn: dynamics writing - simplified & compact} for initial condition $\phi_\gamma(0)=\phi_{\gamma,0}$ and (constant) input $P_\gamma^0$. Note that such a solution to the ODE always exists and is unique by  Lemma~\ref{lemma: exists ! solution to ODE}.  
With this notation and the controller in Algorithm~\ref{algorithm writing 1 memristor}, the dynamics is given by
\begin{equation}
    \phi_\gamma((i+1)T) = \Phi\left(\phi_\gamma(iT),P_\gamma^0(iT)\right)
\end{equation}
with 
\begin{equation}
    P_\gamma^0(iT) =  \left \{ \begin{array}{ll}
        x & \text{if } i=1,  \\
        \alpha(W(\hat{\phi}_\gamma^\ell)-W(\phi_\gamma^\ell(iT))) & \text{if } i\geq 2,
    \end{array} \right.
\end{equation} 
where $x\neq 0$. 
In what follows, we will denote
\begin{equation} \label{eqn: F(phi)}
    F(\phi_\gamma) = \Phi\left(\phi_\gamma,\alpha(W(\hat{\phi}_\gamma^\ell)-W(\phi_\gamma^\ell))\right).
\end{equation}

If we now define
$ \phi_{\gamma}^{[\ell-1]} := \begin{bmatrix}
          \phi_\gamma^1 & \hdots & \phi_\gamma^{\ell-1}
      \end{bmatrix}^\top $
  and   $ F^{[\ell-1]}(\phi_\gamma) :=\begin{bmatrix}
          F^1(\phi_\gamma)  & \hdots & F^{\ell-1}(\phi_\gamma)
      \end{bmatrix}^\top$, 
  then we have the following discrete-time nonlinear autonomous dynamical system 
   \begin{equation} \label{eqn: dynamics Theorem 13.9}
        \begin{array}{ll}
            \phi_\gamma^{[\ell-1]}((i+1)T) &= F^{[\ell-1]}(\phi_\gamma(iT)), \\ 
            \phi_\gamma^\ell((i+1)T) &= F^\ell(\phi_\gamma(iT)), 
        \end{array}
    \end{equation} 
  with initial conditions 
  \begin{equation}
     \phi_\gamma^{[\ell-1]}(0) = \begin{bmatrix}
          \phi^1_{\gamma, 0} & \hdots & \phi^{\ell-1}_{\gamma,0}
      \end{bmatrix}^\top \quad \text{and} \quad \phi_\gamma^\ell(0)=\phi^\ell_{\gamma,0}.
  \end{equation}
  Here, $F^k(\cdot)$ denotes the $k$-th element of the vector $F(\cdot)$ as seen in \eqref{eqn: F(phi)}, $k\in[\ell]$. 
  Note that, by definition, we have
\begin{equation} \label{eqn: F^l-phi^l}
    F^\ell(\phi_\gamma) - \phi_\gamma^\ell = \int_{0}^{T}    \dot{\phi}_\gamma^\ell(s) ds  = \int_{0}^{T}  f^\ell(\phi_\gamma(s),P_\gamma^0) ds,
\end{equation}
with $\phi_\gamma(\cdot)$ the solution to \eqref{eqn: dynamics writing - simplified & compact} for initial condition $\phi_\gamma^\ell$ and input
$P_\gamma^0 = \alpha(W(\hat{\phi}^\ell)-W(\phi_\gamma^\ell))$. 
From this, it follows that 
$$F^\ell\left(\begin{bmatrix}
      \phi_\gamma^{[\ell-1]} \\ \hat{\phi}_\gamma^\ell
  \end{bmatrix}\right)=\hat{\phi}^\ell$$ 
  for every $\phi_\gamma^{[\ell-1]}$. Note that this last statement implies that the desired flux value $\hat{\phi}^\ell$ is an equilibrium, i.e.,  if $\phi_\gamma^\ell(iT)=\hat{\phi}^\ell$, then the algorithm will stop and no voltage will be applied, meaning that 
$\phi_\gamma^\ell(jT)=\hat{\phi}^\ell, \quad \forall j\geq i.$ 

With this, we can apply the third statement of Theorem~13.9 in \cite{Haddad2008}. This statement is repeated here for convenience, albeit slightly modified to our notation.

\begin{proposition}[{\cite[Thm. 13.9]{Haddad2008}}] \label{thm: 13.9}
    Consider the system \eqref{eqn: dynamics Theorem 13.9}. Assume that there exists a continuous function $\xi:\mathds{R}^{\ell-1}\times \mathds{R}\rightarrow \mathds{R}$ and class $\mathcal{K}$ functions $\theta(\cdot)$ and $\gamma(\cdot)$ satisfying
    \begin{align}
        \xi\left(\begin{bmatrix}
      \phi_\gamma^{[\ell-1]} \\ \hat{\phi}^\ell
  \end{bmatrix}\right) =0,  \quad &
        \theta(|\phi_\gamma^\ell-\hat{\phi}^\ell|) \leq \xi(\phi_\gamma), \text{ and} \\
        \xi(F(\phi_\gamma))  -\xi(\phi_\gamma) &\leq -\gamma(|\phi_\gamma^\ell-\hat{\phi}^\ell|),  
    \end{align}
    for all $\phi_\gamma^{[\ell-1]}\in\mathds{R}^{\ell-1}$ and $\phi_\gamma^\ell\in\mathds{R}$. 
Then, the nonlinear dynamical system given by \eqref{eqn: dynamics Theorem 13.9} is asymptotically stable with respect to $\phi_\gamma^\ell-\hat{\phi}^\ell$,  i.e., 
$\lim_{i\rightarrow \infty} \phi_\gamma^\ell(iT) =\hat{\phi}^\ell. $
\end{proposition}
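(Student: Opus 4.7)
The plan is to treat $\xi$ as a Lyapunov-like function along trajectories of the discrete-time system \eqref{eqn: dynamics Theorem 13.9} and extract the convergence of $\phi_\gamma^\ell(iT)$ to $\hat{\phi}^\ell$ from the one-step decrement inequality via a standard summation argument. The first hypothesis merely records that $\xi$ vanishes on the target slice $\{\phi_\gamma^\ell = \hat{\phi}^\ell\}$; the second guarantees non-negativity of $\xi$ through the class $\mathcal{K}$ bound $\theta$; and the third supplies a strictly positive drift whenever $\phi_\gamma^\ell \neq \hat{\phi}^\ell$, which is the engine of the proof.

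First, I would evaluate the third hypothesis along the trajectory of \eqref{eqn: dynamics Theorem 13.9}, giving, for each $i\geq 0$,
\begin{equation}
\xi(\phi_\gamma((i+1)T)) - \xi(\phi_\gamma(iT)) \leq -\gamma\bigl(|\phi_\gamma^\ell(iT) - \hat{\phi}^\ell|\bigr).
\end{equation}
Telescoping from $i=0$ to $i=N-1$ yields
\begin{equation}
\sum_{i=0}^{N-1} \gamma\bigl(|\phi_\gamma^\ell(iT) - \hat{\phi}^\ell|\bigr) \leq \xi(\phi_\gamma(0)) - \xi(\phi_\gamma(NT)).
\end{equation}
Using that $\theta$ is class $\mathcal{K}$ (hence non-negative) together with the second hypothesis, I obtain $\xi(\phi_\gamma(NT)) \geq \theta(|\phi_\gamma^\ell(NT)-\hat{\phi}^\ell|) \geq 0$, so that the partial sums are uniformly bounded:
\begin{equation}
\sum_{i=0}^{N-1} \gamma\bigl(|\phi_\gamma^\ell(iT) - \hat{\phi}^\ell|\bigr) \leq \xi(\phi_\gamma(0)), \quad \forall N\geq 1.
\end{equation}

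Letting $N\to\infty$, the series of non-negative terms converges, which forces its summand to tend to zero, i.e., $\gamma(|\phi_\gamma^\ell(iT)-\hat{\phi}^\ell|)\to 0$ as $i\to\infty$. Since $\gamma$ is of class $\mathcal{K}$, it is continuous and strictly increasing with $\gamma(0)=0$, so its inverse on $\gamma([0,a))$ is continuous at $0$. Applying this inverse yields $|\phi_\gamma^\ell(iT)-\hat{\phi}^\ell|\to 0$, i.e., $\lim_{i\to\infty} \phi_\gamma^\ell(iT) = \hat{\phi}^\ell$, as claimed.

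The argument is essentially a textbook discrete-time Lyapunov convergence proof, so no step is genuinely hard. The only place that requires care is the final inversion: one must use that class $\mathcal{K}$ functions admit a continuous inverse at the origin so that $\gamma(x_i)\to 0$ implies $x_i\to 0$. This is precisely where the structural assumption that $\gamma$ (and $\theta$) are of class $\mathcal{K}$, rather than merely positive-valued, is essential; everything else reduces to telescoping and the non-negativity bound from $\theta$.
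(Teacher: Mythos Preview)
Your argument is correct: telescoping the one-step decrement, bounding the partial sums via $\xi(\phi_\gamma(NT))\geq 0$ (from the class~$\mathcal{K}$ lower bound $\theta$), and then inverting $\gamma$ at the origin all go through exactly as you wrote and yield the convergence $\phi_\gamma^\ell(iT)\to\hat{\phi}^\ell$ that the proposition asserts. Note, however, that the paper does not furnish its own proof of this proposition; it is quoted from \cite[Thm.~13.9]{Haddad2008} and merely restated in the paper's notation for convenience. What you have written is precisely the standard discrete-time partial-Lyapunov argument underlying that cited theorem, so there is nothing in the paper itself to compare against.
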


Now, consider the candidate Lyapunov function 
$$\xi(\phi_\gamma) = (\phi_\gamma^\ell-\hat{\phi}^\ell)^2.$$ 
 This function is clearly equal to zero if $\phi_\gamma^\ell=\hat{\phi}^\ell$. 
In addition, $\xi(\cdot)$ satisfies 
$\xi(\phi_\gamma) =  \theta(|\phi_\gamma^\ell-\hat{\phi}^\ell|),$
where $\theta(\cdot)$ is the class $\mathcal{K}$ function given by
$\theta(x) =x^2$ for $x\in[0,\infty)$.
Furthermore, we have that
\begin{equation}
\begin{split}
\Delta \xi(\phi_\gamma) &:= \xi(F^\ell(\phi_\gamma))  -\xi(\phi_\gamma) \\
    &= (F^\ell(\phi_\gamma) - \phi_\gamma^\ell)^2 +2  (F^\ell(\phi_\gamma) - \phi_\gamma^\ell)(\phi_\gamma^\ell - \hat{\phi}^\ell).
\end{split}
\end{equation}
Note that, by Property~\ref{property 1 of fk} of Lemma~\ref{lemma: properties of f^k}, the sign of $f^\ell(\phi_\gamma(t),P_\gamma^0)$ is constant over the time interval $[0,T]$ and 
\begin{align}
    &\text{sign}(F^\ell(\phi_\gamma) - \phi_\gamma^\ell) =\text{sign}(f^\ell(\phi_\gamma(t),P_\gamma^0)) \\
    &= \text{sign}(P_\gamma^0) = - \text{sign}(\phi_\gamma^\ell-\hat{\phi}^\ell).
\end{align}
Here, the latter equality follows from the strict monotonicity of $W(\cdot)$, see Assumption~\ref{assumption W strict monotone}. Hence,
\begin{align}
    \Delta \xi (\phi_\gamma) &= |F^\ell(\phi_\gamma) - \phi_\gamma^\ell|^2 - 2|F^\ell(\phi_\gamma) - \phi_\gamma^\ell| |\phi_\gamma^\ell-\hat{\phi}^\ell|
\end{align}
Using \eqref{eqn: F^l-phi^l}, Property~\ref{property 2 of fk} of Lemma~\ref{lemma: properties of f^k}, and the monotonicity of $W(\cdot)$, then gives
\begin{align}
    &|F^\ell(\phi_\gamma) - \phi_\gamma^\ell| 
    \leq \int_{0}^{T} (\eta W_\mathrm{max})^{\ell-1} |P_\gamma^0|  ds \\
    &= T(\eta W_\mathrm{max})^{\ell-1} |\alpha(W(\hat{\phi}^\ell)-W(\phi_\gamma^\ell))| \\
    &\leq \alpha \beta T (\eta W_\mathrm{max})^{\ell-1} |\hat{\phi}^\ell - \phi_\gamma^\ell| \leq |\hat{\phi}^\ell - \phi_\gamma^\ell|.
\end{align}
Here, we have used \eqref{eqn: choice of alpha and T}. 
 Equivalently,
$- |\hat{\phi}^\ell - \phi_\gamma^\ell| \leq - |F^\ell(\phi_\gamma) - \phi_\gamma^\ell|$
such that 
$\Delta \xi(\phi_\gamma) \leq  -|F^\ell(\phi_\gamma) - \phi_\gamma^\ell|^2$.

Now, by Property~\ref{property 3 of fk} of Lemma~\ref{lemma: properties of f^k} we have that
$$ |f^{\kappa+1}(\phi_\gamma,P_\gamma^0)| \geq \sigma(W_\mathrm{min} |f^{\kappa}(\phi_\gamma,P_\gamma^0)|) $$
for all $\kappa\in[\ell-1]$. Hence, 
\begin{align}
    &|F^\ell(\phi_\gamma) - \phi_\gamma^\ell| =  \int_{0}^{T}  |f^\ell(\phi_\gamma(s),P_\gamma^0)| ds \\
    &\geq \int_{0}^{T} \sigma (W_\mathrm{min} \sigma(\ldots \sigma(W_\mathrm{min} |f^1(\phi_\gamma(s),P_\gamma^0)|))) ds \\
    &= T \sigma (W_\mathrm{min} \sigma(\ldots \sigma(W_\mathrm{min} |P_\gamma^0|))). 
\end{align}
Here, note that $|P_\gamma^0| = \alpha |W(\hat{\phi}^\ell)-W(\phi_\gamma^\ell)|$. 
Let us define 
$$\nu(r) := \inf_{\Tilde{\phi}_\gamma^\ell} \left\{ \alpha|W(\hat{\phi}^\ell)-W(\Tilde{\phi}_\gamma^\ell)| \ \Bigg| \ |\hat{\phi}^\ell - \Tilde{\phi}_\gamma^\ell| \geq r  \right \}$$
for $r\geq 0$. 
Then, it is clear that 
$\nu(|\hat{\phi}^\ell-\phi_\gamma^\ell|) \leq \alpha |W(\hat{\phi}^\ell)-W(\phi_\gamma^\ell)|$
for all $\phi_\gamma^\ell$.
Furthermore, it can easily be verified that $\nu(\cdot)$ is a class $\mathcal{K}$ function.  Therefore, from the monotonicity of $\sigma(\cdot)$, it follows that 
\begin{align}
    &|F^\ell(\phi_\gamma) - \phi_\gamma^\ell| \geq \gamma(|\hat{\phi}^\ell-\phi_\gamma^\ell|),
\end{align} 
where $\gamma(|\hat{\phi}^\ell-\phi_\gamma^\ell|) \coloneqq T \sigma (W_\mathrm{min} \sigma(\ldots \sigma(W_\mathrm{min} \nu(|\hat{\phi}^\ell-\phi_\gamma^\ell|))))$.
From $\sigma(\cdot)$ being strictly monotone and $\nu(\cdot)$ being a class $\mathcal{K}$ function,  it immediately follows that $\gamma(\cdot)$ is a class $\mathcal{K}$ function. 
Hence, we have found the class $\mathcal{K}$ function $\gamma(\cdot)$ such that 
$$\Delta \xi (\phi_\gamma) \leq - \gamma(|\hat{\phi}^\ell-\phi_\gamma^\ell|).$$
From Proposition~\ref{thm: 13.9}, it then follows that 
$\lim_{i\rightarrow \infty} \phi_\gamma^\ell(iT) =\hat{\phi}^\ell. $
Therefore, the controller described in Algorithm~\ref{algorithm writing 1 memristor} is such that \eqref{eqn: epsilon bound} holds for some $\hat{T}_{kj}^\ell\geq 0$, which proves the theorem.

\bibliographystyle{IEEEtran}
\bibliography{References}

\end{document}